\documentclass{amsart}
\usepackage[hmargin=30mm, vmargin=30mm]{geometry}

\usepackage[english]{babel}
\usepackage{amsmath,amssymb}
\usepackage{mathrsfs}
\usepackage{dsfont}
\usepackage{amscd}
\usepackage{lmodern}
\usepackage{color}
\numberwithin{equation}{section}
\usepackage{mathtools}

\newtheorem{thmintro}{Theorem}

\newtheorem{corintro}[thmintro]{Corollary}
\newtheorem{theorem}{Theorem}[section]

\newtheorem{lemma}[theorem]{Lemma}
\newtheorem{prop}[theorem]{Proposition}
\theoremstyle{definition}
\newtheorem{definition}[theorem]{Definition}
\newtheorem{remark}[theorem]{Remark}

\newtheorem*{notation}{Notation}

\renewcommand{\hat}{\widehat} 
\newcommand{\bC}{\mathbb{C}}
\newcommand{\C}{\mathbb{C}}
\newcommand{\Z}{\mathbb{Z}}
\newcommand{\R}{\mathbb{R}}
\newcommand{\CC}{\mathbb{C}}

\newcommand{\KK}{\mathbb{K}}
\newcommand{\NN}{\mathbb{N}}

\newcommand{\RR}{\mathbb{R}}
\newcommand{\ZZ}{\mathbb{Z}}
\newcommand{\cH}{\mathcal{H}}

\newcommand{\g}{\mathfrak{g}}
\newcommand{\kk}{\mathfrak{k}}

\newcommand{\hh}{\mathfrak{h}}
\newcommand{\fh}{\mathfrak{h}}

\newcommand{\HH}{\mathcal{H}}
\newcommand{\PP}{\mathcal{P}}
\newcommand{\uu}{\mathfrak{u}}
\newcommand{\fu}{\mathfrak{u}}

\newcommand{\inv}{^{-1}}
\newcommand{\la}{\lambda}
\newcommand{\co}{\colon\thinspace}
\newcommand{\gl}{{\mathfrak g}{\mathfrak l}}
\newcommand{\sll}{{\mathfrak s}{\mathfrak l}}


\DeclareMathOperator{\Aut}{Aut}

\DeclareMathOperator{\sppan}{span}

\DeclareMathOperator{\GL}{GL}

\DeclareMathOperator{\tr}{tr}

\DeclareMathOperator{\der}{der}
\DeclareMathOperator{\su}{sum}

\DeclareMathOperator{\conv}{conv}
\DeclareMathOperator{\End}{End}
\DeclareMathOperator{\mi}{min}
\DeclareMathOperator{\diag}{diag}

\begin{document}

\renewcommand{\proofname}{{\bf Proof}}

\title[Positive energy representations for locally finite split Lie algebras]{Positive energy representations\\ for locally finite split Lie algebras}

\author[Timoth\'ee Marquis]{Timoth\'ee \textsc{Marquis}$^*$}
\address{Department Mathematik, FAU Erlangen-Nuernberg, Cauerstrasse 11, 91058 Erlangen, Germany}
\email{marquis@math.fau.de}
\thanks{$^*$Supported by a Marie Curie Intra-European Fellowship}

\author[Karl-Hermann Neeb]{Karl-Hermann \textsc{Neeb}$^\dagger$}
\address{Department Mathematik, FAU Erlangen-Nuernberg, Cauerstrasse 11, 91058 Erlangen, Germany}
\email{neeb@math.fau.de}
\thanks{$^\dagger$Supported by DFG-grant NE 413/7-2, Schwerpunktprogramm ``Darstellungstheorie"}


\begin{abstract} 
Let $\g$ be a locally finite split simple complex Lie algebra 
of type $A_J$, $B_J$, $C_J$ or $D_J$ and $\fh \subseteq \g$ be a splitting Cartan subalgebra. 
Fix $D \in \der(\g)$ with $\fh \subseteq \ker D$ (a diagonal derivation). 
Then every unitary highest weight representation 
$(\rho_\lambda, V^\lambda)$ of $\g$ extends to a representation $\tilde\rho_\lambda$ of 
the semidirect product $\g \rtimes \C D$ and we say that 
$\tilde\rho_\lambda$ is a positive energy representation if the 
spectrum of $-i\tilde\rho_\lambda(D)$ is bounded from below. 
In the present note we characterise all pairs $(\lambda,D)$ with $\la$ bounded for which this is the case. 

If $U_1(\cH)$ is the unitary group of Schatten class $1$ on an infinite 
dimensional real, complex or quaternionic Hilbert space and 
$\lambda$ is bounded, then we accordingly obtain a characterisation of 
those highest weight representations $\pi_\lambda$ satisfying the positive energy 
condition with respect to the continuous $\R$-action induced by $D$. 
In this context the representation 
$\pi_\lambda$ is norm continuous and our results imply the remarkable result 
that, for positive energy representations, adding a suitable inner derivation to $D$, 
we can achieve that the minimal eigenvalue of $\tilde\rho_\lambda(D)$ is $0$ 
(minimal energy condition). The corresponding pairs $(\lambda,D)$ satisfying 
the minimal energy condition are rather easy to describe explicitly. 
\end{abstract}

\maketitle

\section{Introduction}
Locally finite split Lie algebras are natural infinite-dimensional generalisations of finite-dimensional Lie algebras. More precisely, a Lie algebra $\g$ over a field $\KK$ of characteristic zero is called \emph{split} if it has a root decomposition $\g=\hh+\sum_{\alpha\in\Delta}{\g_{\alpha}}$ with respect to some maximal abelian subalgebra $\hh$. It is moreover \emph{locally finite} if every finite subset of $\g$ generates a finite-dimensional subalgebra. Such a Lie algebra $\g$ possesses a generalised Levi decomposition (see \cite{St99}), whose Levi factor is an $\hh$-invariant semisimple Lie algebra. In turn, the infinite-dimensional locally finite split simple Lie algebras have been classified (see \cite{NeSt01}), and can be realised as subalgebras of the algebra $\gl(J,\KK)$ of $J\times J$ matrices with only finitely many nonzero entries, for some infinite set $J$. They fall into four distinct families of isomorphism classes, parametrised by the \emph{locally finite root systems} of type $A_J$, $B_J$, $C_J$ and $D_J$ (see \cite{LN04}).

On the other hand, a locally finite split simple Lie algebra $\g$ is the directed union of its finite-dimensional simple subalgebras (\cite[Section~V]{St99}). Unitary highest weight representations for such Lie algebras over $\KK=\CC$ were studied in \cite{Ne98}. By \cite[Section~VIII]{St99}, $\g$ carries an antilinear involutive antiautomorphism $X\mapsto X^*$ such that $\g_{\RR}=\{X\in\g \ | \ X^*=-X\}$ is a compact real form, that is, $\g_{\RR}$ is a union of finite dimensional compact Lie algebras. A $\g$-module $V$ is then called \emph{unitary} if it carries a contravariant positive definite hermitian form $\langle\cdot,\cdot\rangle$, in the sense that $\langle X.v,w\rangle=\langle v,X^*.w\rangle$ for all $v,w\in V$, $X\in\g$. A $\g$-module $V=V^{\la}$ is called a \emph{highest weight module} with highest weight $\la\in\hh^*$ (with respect to some positive system $\Delta_+\subseteq\Delta$ of roots) if it is generated by some primitive element, that is, by some $\hh$-weight vector $v\in V$ with weight $\la$ such that $\g_{\alpha}.v=\{0\}$ for all $\alpha\in\Delta_+$. Unitary highest weight modules $V^{\la}$ for $\g$ were classified in \cite{Ne98}, and correspond to dominant integral weights $\la$. Moreover, if $W\leq\GL(\hh^*)$ denotes the Weyl group of $\g$, the set $\PP_{\la}$ of $\hh$-weights on $V^{\la}$ is given by $\PP_{\la}=\conv(W.\la)\cap (\la+\ZZ[\Delta])$ (\cite[Theorem~I.11]{Ne98}).

In this paper, we characterise the unitary highest weight representations of $\g$ 
satisfying the following ``positive energy condition''. 
A skew-hermitian derivation $D\in\der(\g)$ of $\g$ is called \emph{graded} if it annihilates $\hh$, in which case $D$ preserves the root spaces of $\g$. Such a derivation is then described by a character $\chi\co\ZZ[\Delta]\to\RR$ such that $D(x_{\alpha})=i\chi(\alpha)x_{\alpha}$ for all $x_{\alpha}\in\g_{\alpha}$, $\alpha\in\Delta$. Any unitary highest weight representation $\rho_{\la}\co\g\to \End(V^{\la})$ of $\g$ may be extended to a representation $\widetilde{\rho}_{\la}\co \g\rtimes \CC D\to \End(V^{\la})$ by setting $\widetilde{\rho}_{\la}(D)v_{\mu}=i\chi(\mu-\la)v_{\mu}$ for any $\mu\in\PP_{\la}$ and any weight vector $v_{\mu}\in V^{\la}$ of weight $\mu$. The representation $\widetilde{\rho}_{\la}$ is called a \emph{positive energy representation} if the spectrum of $H:=-i\widetilde{\rho}_{\la}(D)$ 
is bounded from below. We call $H$ a \emph{Hamiltonian} of the representation 
$\rho_\lambda$.

In view of the above description of $\PP_{\la}$, the positive energy condition can then be rewritten as
$$\inf\chi(W.\la-\la)>-\infty.$$
Let $J$ be an infinite set such that $\Delta$ is of one of the types $A_J$, $B_J$, $C_J$ or $D_J$. Then there exists some basis $(e_j)_{j\in J}$ of $\hh$ (respectively 
a one-dimensional extension of $\hh$) indexed by $J$ such that the set of \emph{coroots} $\Delta^{\vee}=\{\alpha^{\vee} \ | \ \alpha\in\Delta\}$ (see \cite[Section~I]{St99}) is contained in $\sppan_{\ZZ}(e_j)_{j\in J}$, while $\Delta$ is contained in the $\ZZ$-span of the linearly independent system $(\epsilon_j)_{j\in J}\subseteq\hh^*$ defined by $\epsilon_j(e_k)=\delta_{jk}$ for all $j,k\in J$ (see \cite[Section~I and Theorem~IV.6]{NeSt01}). 

We recall that the weight $\la\in\hh^*$ is called \emph{integral} if $\la(\alpha^{\vee})\in\ZZ$ for all $\alpha\in\Delta$. This implies in particular that $\la$ is \emph{discrete}, in the sense that $\{\la_j:=\la(e_j) \ | \ j\in J\}$ is a discrete subset of $\RR$. We moreover call $\la$ \emph{bounded} if $\sup_{j\in J}|\la_j|<\infty$. Finally, a character $\chi\co\ZZ[\Delta]\to\RR$ is said to be \emph{summable} if
it is the restriction of a homomorphism 
$\widetilde{\chi}\co\sppan_{\ZZ}(\epsilon_j)_{j\in J}\to\RR$
 satisfying $$\sum_{j\in J}{|\widetilde{\chi}(\epsilon_j)|}<\infty.$$
We can now state the main result of this paper, which provides a characterisation of the positive energy highest weight representations of $\g$ with bounded highest weight.

\begin{thmintro}\label{thm:mainintro}
Let $(\g,\hh)$ be a locally finite split simple Lie algebra with root system $\Delta$ and Weyl group $W\leq\GL(\hh^*)$. Let $\la\in\hh^*$ be discrete and bounded. Then for a character $\chi\co\ZZ[\Delta]\to\RR$, the following are equivalent:
\begin{enumerate}
\item $\inf\chi(W.\la-\la)>-\infty.$
\item $\chi=\chi_{\mi}+\chi_{\su}$ for some ``minimal energy'' 
character $\chi_{\mi}$, satisfying $\inf\chi_{\mi}(W.\la-\la)=0$, and some summable character $\chi_{\su}$.
\end{enumerate}
\end{thmintro}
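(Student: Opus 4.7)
The plan is to prove the two implications separately, dispatching the easy direction first. Fix a lift $\widetilde\chi$ of $\chi$ to $\sppan_\ZZ(\epsilon_j)_{j\in J}$ and set $c_j := \widetilde\chi(\epsilon_j)$, $M := \sup_j|\la(e_j)|<\infty$. For the direction $(2)\Rightarrow(1)$, observe that for any $w \in W$ the coefficients of $w.\la - \la$ in the basis $(\epsilon_j)$ lie in $[-2M, 2M]$ and vanish outside a finite subset of $J$, so if $\chi = \chi_{\mi} + \chi_{\su}$ with $\chi_{\su}$ summable, then $|\chi_{\su}(w.\la - \la)| \leq 2M \sum_j |\widetilde{\chi_{\su}}(\epsilon_j)|$ uniformly in $w$; combined with $\chi_{\mi}(w.\la - \la) \geq 0$ this yields $\inf \chi(W.\la - \la) > -\infty$.

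The harder direction $(1) \Rightarrow (2)$ is the construction of the decomposition. Since $\la$ is discrete and bounded, the set $\{\la(e_j) : j \in J\}$ is finite; enumerate its distinct values $v_1 > \dots > v_n$ and set $J_i := \{j : \la(e_j) = v_i\}$, partitioning $J$. The plan is to find constants $c^{(i)}_\ast \in \RR$ (an ``asymptotic value'' of $c$ on $J_i$) such that $(c_j - c^{(i)}_\ast)_{j \in J_i}$ is $\ell^1$-summable, and then to set $\widetilde{\chi_{\mi}}(\epsilon_j) := c^{(i)}_\ast$ for $j \in J_i$ and $\chi_{\su} := \chi - \chi_{\mi}$. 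The construction will automatically give $\chi_{\su}$ summable; the minimal-energy property will follow from the specific choice of $c^{(i)}_\ast$.

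The technical core is extracting $\ell^1$-bounds from positive energy. In type $A_J$, a finitary bijective swap $w : A \leftrightarrow B$ with $A \subset J_i$, $B \subset J_{i'}$, $|A|=|B|$, $v_i > v_{i'}$, gives
$$\chi(w.\la - \la) = (v_i - v_{i'})\Bigl(\sum_{j\in B} c_j - \sum_{j\in A} c_j\Bigr) \geq -C,$$
so $\sum_A c_j - \sum_B c_j$ is uniformly bounded above. In types $B_J, C_J, D_J$ one also uses sign-change elements: for a finite $S \subset J$ (subject to parity for $D_J$), the relation $-2 \sum_{j \in S}\la(e_j) c_j \geq -C$ produces a uniform upper bound on $\sum_{j \in S}\la(e_j)c_j$. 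Taking $A$ to consist of the ``top'' values of $c$ on $J_i$ and $B$ of the ``bottom'' values on $J_{i'}$ (and, for $B,C,D$, choosing $S$ according to signs of $\la(e_j)c_j$) and letting the sizes tend to infinity, these inequalities force summability of the deviations $(c_j - c^{(i)}_\ast)_{j \in J_i}$ for the correct choice of $c^{(i)}_\ast$.

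Finally, $\chi_{\mi}$ is constant on each $J_i$, so $\chi_{\mi}(w.\la - \la)$ reduces to a finite sum $\sum_{i,i'} m_{ii'} v_i(c^{(i')}_\ast - c^{(i)}_\ast)$ indexed by a nonnegative integer matrix $(m_{ii'})$ with equal row and column sums. A rearrangement-inequality argument, whose ``correct order'' on the pairs $(v_i, c^{(i)}_\ast)$ is forced by the positive-energy condition on $\chi$, shows that this sum is non-negative, with equality at $w = \id$, i.e.\ $\inf \chi_{\mi}(W.\la - \la) = 0$. The main obstacle throughout is passing from the pairwise bounds to genuine $\ell^1$-summability by combining permutations of unbounded size with the right index selection, and handling separately the case $0 \in \{v_i\}$ in types $B_J, D_J$, where the corresponding $J_i$ is stabilised by all sign changes, removing half the constraints and complicating the identification of $c^{(i)}_\ast$.
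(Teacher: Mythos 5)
The easy direction $(2)\Rightarrow(1)$ is fine and matches the paper's Lemmas~\ref{lemma:invariance_summable}--\ref{lemma:invariance_summable2}, and your instinct to extract $\ell^1$-type information from products of transpositions (and sign changes in types $B,C,D$) is correct; it is exactly the use of equations~(\ref{eqn:basic_loc_fin})--(\ref{eqn:prod_transp}) in the paper. However, the core construction in your direction $(1)\Rightarrow(2)$ has a genuine gap: you take $\chi_{\mi}$ to be \emph{constant} on each level set $J_i=\{j: \la(e_j)=v_i\}$, setting $\widetilde{\chi_{\mi}}(\epsilon_j)=c^{(i)}_\ast$, and claim that the PEC forces $(c_j-c^{(i)}_\ast)_{j\in J_i}$ to be summable. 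This is false, and such a decomposition need not exist. For instance, take $\Delta=A_J$ with $J=\NN$, $\la_j=0$ for $j$ odd, $\la_j=1$ for $j$ even, and $d_j=j$ for $j$ odd, $d_j=0$ for $j$ even. Then $\la_i<\la_j$ implies $d_i\geq d_j$, so $\chi\in C_{\min}(\la,A_J)$ and $\inf\la(W.\chi-\chi)=0$ (and the PEC trivially holds), yet $(d_j)_{j\in J_0}=\{1,3,5,\dots\}$ is not a constant plus an $\ell^1$ sequence. The same phenomenon occurs in type $B_J$ with $\la\equiv -1$ and $d_j=j$: there $\la_jd_j\leq 0$ for all $j$, so $\chi$ is already a minimal-energy character, yet it is not constant-plus-summable.

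The point you miss is that the minimal-energy characters form the much larger cones of Definition~\ref{definition:cones_fin}: in type $A$ one only needs the monotonicity $\la_i<\la_j\implies d_i\geq d_j$, allowing $D(J_i)$ to range freely inside an interval (and to be unbounded on the extremal levels $m_{\min},m_{\max}$). What the PEC actually forces is the separation of these intervals, and the summability only of the parts of $D(J_{n_t})$ that overflow their designated interval; that is exactly what Proposition~\ref{prop:3cases} is engineered to deliver, via the ordering and coincidence of accumulation points $r^{\min}_m,r^{\max}_n$. Your pairwise transposition and sign-change bounds are a step in that direction, but by themselves they cannot yield $\ell^1$-closeness to a \emph{single} constant, only to an interval. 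In types $B,C,D$ the paper also bypasses the case analysis that you anticipate around $0\in\{v_i\}$ via a reduction you do not use: the PEC for $W(B_J)$ forces the PEC for the $W(A_J)$-triple $(J,-|\la|,|\chi|)$ (Lemma~\ref{lemma:PECB_PECAabs}), and then the only additional input needed beyond the $A_J$-characterisation is the summability over $\{j:\la_jd_j>0\}$ from Lemma~\ref{lemma:summability_at_0_B}. You would need both the corrected, interval-based description of $\chi_{\mi}$ and some such reduction to make the $B/C/D$ argument go through.
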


\noindent
We also provide a description of ``minimal energy'' characters (see Proposition~\ref{prop:minimality} and Remark~\ref{remark:min_energy_DJ}). 
In \cite[Corollary~3.2]{Convexhull} a similar description of the 
``minimal energy'' characters was obtained by Coxeter geometry. 
A priori, these can be described much more easily than the ``positive energy'' 
characters and the main point of the theorem is that it reduces the latter 
problem to the former. The proof of Theorem~\ref{thm:mainintro} 
is given in Section~\ref{section:proof_main_thmintro} below.

\medskip

The assumption that the highest weight $\la$ be bounded in Theorem~\ref{thm:mainintro} is motivated by the study of positive energy (projective) unitary representations of the corresponding Lie groups, 
which we now briefly review. We then state a corollary of Theorem~\ref{thm:mainintro} in this context.

The real form $\g_{\RR}$ of the locally finite split simple Lie algebra $\g$ is endowed with an invariant scalar product $\langle\cdot,\cdot\rangle$, in the sense that $\langle [X,Y],Z\rangle =\langle X,[Y,Z]\rangle$ for all $X,Y,Z\in\g_{\RR}$. For instance, if $\g=\sll(J,\CC)$ is the subalgebra of $\gl(J,\CC)$ of traceless matrices, it is given by $\langle X,Y\rangle=\tr(XY^*)$. 
The Hilbert space completion $\kk$ of $\g_{\RR}$ is then a so-called \emph{Hilbert--Lie algebra}, that is, a real Lie algebra and a real Hilbert space with compatibility of the two structures given by the invariance of the scalar product. For $\g=\sll(J,\CC)$, the corresponding completion is the space $\uu_2(\HH)$ of skew-symmetric Hilbert-Schmidt operators on the complex Hilbert space $\HH = \ell^2(J,\bC)$. 

By a theorem of Schue (\cite{Schue61}), Hilbert--Lie algebras decompose into an orthogonal direct sum of simple ideals (and center). Moreover, each simple infinite-dimensional Hilbert--Lie algebra is isomorphic to $\uu_2(\HH)$, for some infinite-dimensional real, complex or quaternionic Hilbert space $\HH$. These are, in turn, classified by the locally finite root systems (see \cite[Examples~C.4,5,6]{Ne12}).

Going back to our example $\g=\sll(J,\CC)$, we let $\rho_{\la}\co\g\to \End(V^{\la})$ be, as before, a unitary highest weight representation of $\g$ with highest weight $\la$. Then, under the assumption that $\la$ is bounded, the restriction of $\rho_{\la}$ to $\g_{\RR}$ extends to a continuous unitary representation $\rho_{\la}\co \uu_1(\HH)\to \uu(\HH_{\la})$ with $\left\Vert \rho_{\la} \right\Vert\leq \sup_{j\in J}|\la_j|$, where $\uu_1(\HH)\subseteq \uu_2(\HH)$ denotes the Banach space of skew-hermitian trace-class operators on $\HH$ and $\HH_{\la}$ is the Hilbert space completion of $V^{\la}$ (see \cite[Proposition~III.7]{Ne98}). Moreover, if $\la_j\in\ZZ$ for all $j\in J$, then $\rho_{\la}$ exponentiates to a holomorphic representation $$\widehat{\rho}_{\la}\co U_1(\HH)\to U(\HH_{\la})$$ from $U_1(\HH)=\GL(\HH)\cap ({\mathbf 1} + \uu_1(\HH))$ to the unitary group $U(\HH_{\la})$.

Assume now that the Lie group $G=U_1(\HH)$ is endowed with a continuous $\RR$-action, given by a homomorphism $\alpha\co \RR\to\Aut(G):t\mapsto\alpha_t$. If $U\co\RR\to U(\HH_{\la}):t\mapsto U_t$ is a unitary representation of $\RR$ on $\HH_{\la}$, then the map 
$$\pi_{\la}\co G\rtimes_{\alpha}\RR\to  U(\HH_{\la}):(g,t)\mapsto \rho_{\la}(g)U_t$$
is called a \emph{covariant unitary representation} of $(G,\RR,\alpha)$ if it defines a unitary representation on $\HH_{\la}$ of the semi-direct product $G\rtimes_{\alpha}\RR$. This representation is said to be of \emph{positive energy} if the spectrum of the corresponding Hamiltonian $H:=-i\frac{d}{dt}|_{t=0}U_t$ is bounded below. 

In our setting, one can show that $\alpha$ must be of the form
\begin{equation}
  \label{eq:action}
\alpha_t(g)=e^{itA}ge^{-itA} 
 \end{equation}
for some self-adjoint bounded operator $A\in B(\HH)$. A sufficient condition for $\rho_{\la}$ to extend to a covariant representation of $(G,\RR,\alpha)$ is the diagonalisability of $A$: in this case, choosing the orthonormal basis $(e_j)_{j\in J}$ of $\HH$ so that $Ae_j=d_je_j$ for all $j\in J$, for some $d_j\in\RR$, one gets a covariant representation $\pi_{\la}$ as above by setting 
$$U_tv_{\la}=v_{\la}\quad\textrm{and}\quad U_tv_{\mu}=e^{it\chi(\mu-\la)}v_{\mu}$$
for any $t\in \RR$, $\mu\in\PP_{\la}$ and any $\mu$-weight vector $v_{\mu}\in\HH_{\la}$, where $\chi\co\ZZ[\Delta]\to\RR$ is the character induced by the assignment $\epsilon_j\mapsto d_j$. Comparing this situation with Theorem~\ref{thm:mainintro}, we see that the decomposition $\chi=\chi_{\mi}+\chi_{\su}$ in this theorem corresponds to a decomposition $A=A_{\mi}+A_{\su}$ of $A$ as a sum of two commuting (simultaneously diagonalisable) operators $A_{\mi},A_{\su}\in B(\HH)$ such that $iA_{\su}\in \uu_1(\HH)$, and such that $A_{\mi}$ yields a \emph{minimal energy representation} $\pi_{\la}$, in the sense that the corresponding Hamiltonian $H_{\mi}$ is non-negative (with eigenvalue $0$ on the highest weight vector $v_{\la}$). Writing $\alpha^{\mi}$ and $\alpha^{\su}$ for the $\RR$-actions on $G$ induced by $A_{\mi}$ and $A_{\su}$ respectively, this implies that $\alpha_t$ differs only from $\alpha_t^{\mi}$ by an inner automorphism $\alpha_t^{\su}$ of $G$ commuting with $\alpha_t$ and $\alpha_t^{\mi}$ ($t\in \RR$): in this case, we will say that the corresponding covariant representations of $G\rtimes_{\alpha}\RR$ and $G\rtimes_{\alpha^{\min}}\RR$ are \emph{similar}. Thus Theorem~\ref{thm:mainintro} has the following corollary:

\begin{corintro}\label{corintro:U1}
Let $G=U_1(\HH)$ be endowed with a continuous $\RR$-action $\alpha\co \RR\to\Aut(G)$, given by $\alpha_t(g)=e^{itA}ge^{-itA}$ for some self-adjoint bounded operator $A\in B(\HH)$. Assume that $A$ is diagonalisable. Then every positive energy covariant unitary representation $\pi_{\la}\co G\rtimes_{\alpha}\RR\to  U(\HH_{\la})$ of $(G,\RR,\alpha)$ as defined above is similar to a minimal energy representation.
\end{corintro}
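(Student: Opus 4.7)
The plan is to pull the question back from the group $G\rtimes_\alpha \RR$ to the Lie algebra / character level, where Theorem~\ref{thm:mainintro} applies, and then to lift the resulting additive decomposition of the character back to a decomposition of the bounded self-adjoint operator $A$.

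First I would use diagonalisability of $A$ to fix an orthonormal basis $(e_j)_{j\in J}$ of $\HH$ adapted simultaneously to the splitting Cartan subalgebra $\hh\subseteq\g$ and to $A$, with $Ae_j=d_je_j$. As reviewed just before the corollary, the associated character $\chi\co\ZZ[\Delta]\to\RR$ determined by $\chi(\epsilon_j)=d_j$ both encodes the infinitesimal $\RR$-action through the graded derivation $D$, and computes the Hamiltonian: on a weight vector $v_\mu\in\HH_\la$ one has $Hv_\mu=\chi(\mu-\la)v_\mu$. The positive-energy hypothesis thus amounts to $\inf\chi(\PP_\la-\la)>-\infty$, and since $\chi$ is linear and $\PP_\la\subseteq\conv(W.\la)$, this is equivalent to $\inf\chi(W.\la-\la)>-\infty$. (The highest weight $\la$ is bounded because $\rho_\la$ integrates to a norm-continuous representation of $U_1(\HH)$.)

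Next I would invoke Theorem~\ref{thm:mainintro} to write $\chi=\chi_{\mi}+\chi_{\su}$ with $\chi_{\mi}$ of minimal energy and $\chi_{\su}$ summable, and lift this to an operator decomposition by defining $A_{\mi},A_{\su}\in B(\HH)$ to be the diagonal operators in the basis $(e_j)$ with eigenvalues $\chi_{\mi}(\epsilon_j)$ and $\chi_{\su}(\epsilon_j)$ respectively. Summability of $\chi_{\su}$ immediately gives $iA_{\su}\in\uu_1(\HH)$; boundedness of $A_{\mi}=A-A_{\su}$ is then automatic. Hence $u_t:=e^{itA_{\su}}$ lies in $G=\mathbf{1}+\uu_1(\HH)$, so $\alpha^{\su}_t$ is the inner automorphism $\mathrm{Ad}(u_t)$, while $A_{\mi}$ defines a commuting $\RR$-action $\alpha^{\mi}$. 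The factorisation $\alpha_t=\alpha^{\mi}_t\circ\alpha^{\su}_t$ is exactly the required similarity relation, and $\pi_\la$, viewed as a covariant representation of $(G,\RR,\alpha^{\mi})$ via the Hamiltonian $H_{\mi}$ built from $\chi_{\mi}$, is of minimal energy by the defining property of $\chi_{\mi}$.

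The only substantive content here is Theorem~\ref{thm:mainintro} itself, and I do not expect any other real obstacle; the rest is bookkeeping. The one delicate point worth articulating is the translation between $A$ and $\chi$: the diagonalisability of $A$ is precisely what allows one to identify $A$ with a bounded real sequence $(d_j)$ and $\chi$ with its extension to $\sppan_\ZZ(\epsilon_j)$, so that the additive decomposition of $\chi$ in the theorem lifts to a genuine operator decomposition $A=A_{\mi}+A_{\su}$ with $A_{\su}$ trace-class. It is the summability clause of the theorem that yields this trace-class summand, and consequently the inner automorphism $\alpha^{\su}$ sitting inside $G=U_1(\HH)$.
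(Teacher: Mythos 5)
Your argument is essentially identical to the paper's own (which is not given as a formal proof but as the discussion immediately preceding the corollary): translate the positive energy condition into $\inf\chi(W.\lambda-\lambda)>-\infty$ for the character $\chi(\epsilon_j)=d_j$, apply Theorem~\ref{thm:mainintro} to split $\chi=\chi_{\mi}+\chi_{\su}$, and lift this to $A=A_{\mi}+A_{\su}$ with $iA_{\su}\in\uu_1(\HH)$ so that $\alpha^{\su}$ is inner in $G$. Your treatment of the operator-level lift and the observation that the summability clause is exactly what produces the trace-class (hence inner) summand are the same bookkeeping the paper carries out, so the proposal is correct and follows the paper's route.
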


Finally, one can show that the highest weight representation $\widehat{\rho}_{\la}\co U_1(\HH)\to U(\HH_{\la})$ extends to a projective unitary representation $U_2(\HH)\to PU(\HH_{\la})$ of the Hilbert--Lie group 
\[ U_2(\HH)=\GL(\HH)\cap ({\mathbf 1} + \uu_2(\HH)), \] 
and hence to a unitary representation on $\HH_{\la}$ of a central extension 
$\hat U_2(\cH)$ of $U_2(\HH)$, given at the Lie algebra level by the cocycle 
$\omega(X,Y)=i\lambda([X,Y])$ for $X,Y\in\uu_2(\HH)$, where 
we extend $\lambda$ in a natural way to a continuous linear functional on 
$\fu_1(\cH)$ which contains $[X,Y]$ (see \cite{Ne15}). 
Since the continuous $\R$-action \eqref{eq:action} on 
$U_2(\cH)$ lifts canonically to the central extension, 
the corresponding notion of positive energy for the associated 
projective covariant unitary representations of $U_2(\HH)$ 
is also described by Corollary~\ref{corintro:U1}.

The situation discussed in this paper is a model case in which a rather 
detailed analysis of the positive energy condition can be carried out. For every 
triple $(G,\R, \alpha)$, where $G$ is a Lie group and 
$\alpha \colon \R \to \Aut(G)$ defines a continuous action, it is a challenging 
natural problem to determine the irreducible positive energy representations 
$(\pi,\cH)$ of the topological group $G^\sharp := G \rtimes_\alpha \R$. As a consequence 
of the Borchers--Arveson Theorem (\cite[Theorem~3.2.46]{BR02}), for any such representation, 
the restriction $\rho := \pi|_G$ is irreducible 
(see \cite[Theorem~2.5]{Ne14}) and the Hamiltonian of the 
extension to $G^\sharp$ is uniquely determined up to an additive constant 
determining the minimal energy level. Given $\alpha$, the set of irreducible positive energy 
representations of $G^\sharp$ can therefore be considered as a subset 
$\hat G_\alpha$ of the set $\hat G$ of equivalence classes of irreducible unitary 
representations of $G$ and one would like to determine this subset as explicitly 
as possible. In this paper this task is carried out for the subset 
$\hat G_{hw}$ of ``highest  weight representations'' of $G = U_1(\cH)$ in the 
case where $\alpha$ is given by conjugation with diagonal operators.
Here Corollary~\ref{corintro:U1} achieves in the Lie algebra context 
something similar as the 
Borchers--Arveson Theorem which also reduces the study of positive 
energy representations to minimal energy representations. 

One can even show that the representation of the centrally extended group 
$\hat U_2(\cH)$ extends to a group $\hat U_{\rm res}$ 
containing a copy of the centraliser $D$ 
of the diagonal operator $\diag((\lambda_j)_{j \in J})$ in $U(\cH)$ 
(see \cite[Theorem~VII.18]{Ne04} for the $A_J$-case). 
Here $D$ is a finite product $\prod_{m \in \Z} U(\ell^2(J_m))$ 
of full unitary groups where the factors correspond 
to the subsets $J_m := \{ j \in J \ | \ \lambda_j = m\}$. 
The corresponding extension $\hat\pi$ to $\hat U_{\rm res}$ 
is a unitary Lie group representation for which one would like to understand 
the convex cone of all elements $X \in \fu_{\rm res}$ in the Lie algebra for which 
the operator $-id\hat\pi(X)$ is positive. If $X$ is diagonal, 
 this problem is solved by Theorem~\ref{thm:mainintro} if we 
put $\chi(\epsilon_j) := X_{jj}$, but the general case requires refined 
information on convex hulls of adjoint 
orbits (see \cite{Ne10} for similar problems). We plan to address this 
issue in a separate paper because it is of a functional analytic flavour,
whereas the present paper is purely algebraic.

\begin{notation}
Throughout this paper, we denote by $\NN=\{1,2,\dots\}$ the set of positive natural numbers.
\end{notation}

\section{Preliminaries}

\subsection{Locally finite root systems}\label{subsection:preliminaries}
Let $J$ be an infinite set and let $V:=\RR^{(J)}\subseteq \overline{V}:=\RR^J$ be the free vector space over $J$, with canonical basis $\{e_j \ | \ j\in J\}$ and standard scalar product $(e_j,e_k)=\delta_{jk}$. In the dual space $V^*\cong \RR^{J}$, we consider the linearly independent system $\{\epsilon_j:=e_j^* \ | \ j\in J\}$ defined by $\epsilon_j(e_k)=\delta_{jk}$.

Any infinite irreducible (possibly non-reduced) locally finite root system $\Delta$ can be described inside $V^*$ for some suitable set $J$, and is of one of the following types (\cite[\S 8]{LN04}):
\begin{equation*}
\begin{aligned}
A_J&:=\{\epsilon_j-\epsilon_k \ | \ j,k\in J, \ j\neq k\},\\
B_J&:=\{\pm \epsilon_j, \pm(\epsilon_j\pm\epsilon_k) \ | \ j,k\in J, \ j\neq k\},\\
C_J&:=\{\pm 2\epsilon_j, \pm(\epsilon_j\pm\epsilon_k) \ | \ j,k\in J, \ j\neq k\},\\
D_J&:=\{\pm(\epsilon_j\pm\epsilon_k) \ | \ j,k\in J, \ j\neq k\},\\
BC_J&:=\{\pm \epsilon_j, \pm 2\epsilon_j, \pm(\epsilon_j\pm\epsilon_k) \ | \ j,k\in J, \ j\neq k\}.
\end{aligned}
\end{equation*}
For $X\in\{A,B,C,D,BC\}$, we will write $\Delta(X_J)$ for the above locally finite root system of type $X_J$. Note that the root systems of type $A_J$, $B_J$, $C_J$ and $D_J$ are reduced, whereas $\Delta(BC_J)$ is non-reduced.

\subsection{The Weyl group of $\Delta$}\label{subsection:Weyl_group_Delta}
Let $S_J$ denote the symmetric group on $J$, which we view as a subgroup of $\GL(\overline{V})$ with $w\in S_J$ acting as $w(e_j):=e_{w(j)}$. Given a permutation $w\in S_J$ with fixed-point set $I\subseteq J$, we call the set $J\setminus I$ the \emph{support} of $w$. We denote by $S_{(J)}\leq  S_J$ the subgroup of restricted permutations, namely, the set of $w\in S_J$ with finite support. Note that $S_{(J)}\leq \GL(\overline{V})$ stabilises $V$; we will also view $S_{(J)}$ as a subgroup of $\GL(V)$.

We next view $\{\pm 1\}^{J}\subset \RR^J$ as a subgroup of $\GL(\overline{V})$, acting by (componentwise) left multiplication: $\sigma(e_j)=\sigma_je_j$ for $\sigma=(\sigma_j)_{j\in J}\in \{\pm 1\}^{J}$. 
Given some $\sigma=(\sigma_j)_{j\in J}\in \{\pm 1\}^{J}$, we call the subset $I=\{j\in J \ | \ \sigma_j=-1\}$ of $J$ the \emph{support} of $\sigma$. We denote by $\{\pm 1\}^{(J)}$ the set of all $\sigma\in\{\pm 1\}^{J}$ with finite support. Again, we may also view $\{\pm 1\}^{(J)}$ as a subgroup of $\GL(V)$. Finally, we let $\{\pm 1\}^{(J)}_2$ denote the index $2$ subgroup of $\{\pm 1\}^{(J)}$, whose elements have a support of even cardinality.

Let $X\in\{A,B,C,D,BC\}$. We denote by $W=W(X_J)$ the Weyl group corresponding to $\Delta(X_J)$, which we view as a subgroup of $\GL(V)$ or $\GL(\overline{V})$. 
We then have the following descriptions (\cite[\S 9]{LN04}):
\begin{equation*}
\begin{aligned}
W(A_J)&=S_{(J)},\\
W(B_J)&=W(C_J)=W(BC_J)=S_{(J)}\ltimes \{\pm 1\}^{(J)},\\
W(D_J)&=S_{(J)}\ltimes \{\pm 1\}_2^{(J)}.
\end{aligned}
\end{equation*}

\subsection{The positive energy condition}
Let $X\in\{A,B,C,D,BC\}$ and set $W=W(X_J)$. Fix some tuples $\la=(\la_j)_{j\in J}\in\RR^{J}$ and $\chi=(d_j)_{j\in J}\in\RR^{J}$. 

\begin{definition}\label{definition:PEC}
We say that the triple $(J,\la,\chi)$ \emph{satisfies the positive energy condition (PEC) for $W$} if the set $\la(W.\chi-\chi)$ is bounded from below. Here, we view $\la$ as the linear functional
$$\la\co V\to\RR: e_j\mapsto \la_j,$$
and $W.\chi-\chi$ as a subset of $V$, by writing $\chi$ as $\chi=\sum_{j\in J}{d_je_j}\in \overline{V}$. More precisely, recall from \S\ref{subsection:Weyl_group_Delta} that any element of $W$ may be written as a product $\sigma w\inv$ for some $\sigma\in\{\pm 1\}^{(J)}$ and some $w\in S_{(J)}$. Then
\begin{equation*}
\sigma w\inv.\chi-\chi =\sum_{j\in J}{(\sigma_{w\inv(j)}d_je_{w\inv(j)}-d_je_j)}=\sum_{j\in J}{(\sigma_{j}d_{w(j)}-d_j)e_j}\in V.
\end{equation*}
\end{definition}

Note that for any $\sigma\in\{\pm 1\}^{(J)}$ and $w\in S_{(J)}$, we have
\begin{equation}\label{eqn:basic_loc_fin}
\la(\sigma w\inv.\chi-\chi)=\sum_{j\in J}{\la_j(\sigma_{j}d_{w(j)}-d_j)}.
\end{equation}
In particular, given two disjoint finite subsets $\{i_1,i_2,\dots,i_k\}$ and $\{j_1,j_2,\dots,j_k\}$ of $J$, the product $w$ of the $k$ transpositions $\tau_1,\dots,\tau_k\in S_{(J)}$, where $\tau_s$ interchanges $i_s$ and $j_s$ ($s=1,\dots,k$), is an element of $S_{(J)}$ and we have
\begin{equation}\label{eqn:prod_transp}
\la(w.\chi-\chi)=\la(w\inv.\chi-\chi)=\sum_{s=1}^k{(\la_{j_s}-\la_{i_s})(d_{i_s}-d_{j_s})}.
\end{equation}

We record for future reference the following so-called \emph{rearrangement inequality}.  
\begin{lemma}
Let $a_1\leq a_2\leq\dots\leq a_n$ and $b_1\leq b_2\leq \dots\leq b_n$ be two non-decreasing sequences of real numbers. Let also $(c_1,\dots,c_n)$ be a permutation of $(b_1,\dots,b_n)$. Then
$$\sum_{i=1}^{n}{a_ib_i}\geq\sum_{i=1}^{n}{a_ic_i}\geq\sum_{i=1}^{n}{a_ib_{n+1-i}}.$$
\end{lemma}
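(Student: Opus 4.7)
The plan is to prove both inequalities by a standard exchange (bubble-sort) argument. For the upper bound, assume there exist indices $i<j$ with $c_i>c_j$, and let $c'$ be the tuple obtained from $c$ by swapping the entries at positions $i$ and $j$. Then
\[
\sum_{k=1}^n a_k c'_k - \sum_{k=1}^n a_k c_k = (a_i c_j + a_j c_i) - (a_i c_i + a_j c_j) = -(a_j-a_i)(c_i-c_j) \geq 0,
\]
where the inequality uses $a_i\leq a_j$ and $c_j<c_i$. Thus every such swap weakly increases the sum $\sum a_k c_k$.

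Next I would invoke the fact that any permutation of $(b_1,\dots,b_n)$ can be transformed into the sorted tuple $(b_1,\dots,b_n)$ by a finite sequence of transpositions that each remove an inversion (this is essentially bubble sort; the number of inversions is finite and strictly decreases at each step). Applying the exchange step along such a sequence yields $\sum_k a_k b_k \geq \sum_k a_k c_k$, which is the left-hand inequality.

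For the right-hand inequality, I would run the same argument in reverse: if there exist indices $i<j$ with $c_i<c_j$, then swapping the entries at positions $i$ and $j$ weakly decreases the sum by the computation $(a_j-a_i)(c_j-c_i)\geq 0$. Since any permutation can be sorted into the reversed tuple $(b_n,b_{n-1},\dots,b_1)$ by removing ``reverse inversions'' one at a time, iterating the exchange step gives $\sum_k a_k c_k \geq \sum_k a_k b_{n+1-k}$.

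The only mildly delicate point is justifying that the relevant sorting procedures terminate in finitely many steps; this is routine since the number of inversions (respectively reverse inversions) is a nonnegative integer that strictly decreases at each swap. Other than that, the argument is a direct two-term comparison, so I do not anticipate a serious obstacle.
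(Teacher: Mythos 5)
The paper records this lemma without proof, treating the rearrangement inequality as a standard known fact; so there is no in-paper argument to compare against. Your exchange/bubble-sort proof is the classical one and is essentially correct, but there is a sign error in the first displayed computation: expanding
\[
(a_i c_j + a_j c_i) - (a_i c_i + a_j c_j) = a_i(c_j - c_i) + a_j(c_i - c_j) = (a_j - a_i)(c_i - c_j),
\]
not $-(a_j - a_i)(c_i - c_j)$. With $a_i \leq a_j$ and $c_i > c_j$ this is indeed $\geq 0$, so your conclusion that each inversion-removing swap weakly increases $\sum_k a_k c_k$ is right, and the remainder of the argument (termination of the sorting procedure via a strictly decreasing inversion count, and the mirrored argument for the lower bound) goes through without issue. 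Just fix the sign so the displayed line and the stated inequality agree.
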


\section{A few definitions and notations}
Fix some set $J$, as well as two tuples $\la=(\la_j)_{j\in J}\in\RR^J$ and $\chi=(d_j)_{j\in J}\in\RR^J$. 
We define the functions $$D\co J\to \RR:j\mapsto d_j \quad\textrm{and}\quad\Lambda\co J\to\RR:j\mapsto\la_j.$$
We set 
$$m_{\min}=\inf \Lambda(J)\in\RR\cup\{-\infty\}\quad\textrm{and}\quad m_{\max}=\sup \Lambda(J)\in\RR\cup\{\infty\}.$$
We call the triple $(J,\la,\chi)$ \emph{nontrivial} if $\la$ is non-constant, so that $m_{\min}\neq m_{\max}$.

For each $n\in\RR$, we set $$J_n:=\Lambda\inv(n),$$ so that $J=\bigcup_n{J_n}$. Given some $r\in\RR$, we also define the sets 
$$J_n^{>r}=\{j\in J_n \ | \ d_j>r\}\quad\textrm{and}\quad J_n^{<r}=\{j\in J_n \ | \ d_j< r\}.$$
Finally, we let $\Lambda^{\infty}(J)$ denote the set of all $m\in\Lambda(J)$ such that $J_m$ is infinite, and we set
$$\overline{\Lambda^{\infty}}(J):=\big(\Lambda^{\infty}(J)\cup\{m_{\min},m_{\max}\}\big)\cap\RR.$$

\begin{definition}\label{def:locally_bounded}
We call $(J,\la,\chi)$ \emph{essentially bounded} if for all $m\in\Lambda(J)$, the following two conditions are satisfied:
\begin{enumerate}
\item
If $m\neq m_{\max}$, then $D(J_m)$ is bounded below.
\item
If $m\neq m_{\min}$, then $D(J_m)$ is bounded above.
\end{enumerate}
\end{definition}

\begin{definition}\label{definition:accumulation_point}
Given a subset $I$ of $J$, we call $r\in\RR$ an \emph{accumulation point for $I$} if either $r$ is an accumulation point for $D(I)$, or if $D(I')=\{r\}$ for some infinite subset $I'\subseteq I$.
\end{definition}

\begin{definition}\label{definition:min_max_accumulation}
Assume that $(J,\la,\chi)$ is essentially bounded and nontrivial. Let $m\in \overline{\Lambda^{\infty}}(J)$. 
\begin{enumerate}
\item
If $m\neq m_{\min},m_{\max}$, then $D(J_m)$ is bounded, and hence $J_m$ possesses an accumulation point. In this case, we let $r^{\min}_m$ and $r^{\max}_m$ respectively denote the minimal and maximal accumulation points of $J_m$.
\item
If $m=m_{\min}$, then $D(J_m)$ is bounded below. If $J_m$ has an accumulation point, we let $r^{\min}_m$ denote the minimal one. Otherwise, we set $r^{\min}_m=\infty$.
\item
If $m=m_{\max}$, then $D(J_m)$ is bounded above. If $J_m$ has an accumulation point, we let $r^{\max}_m$ denote the maximal one. Otherwise, we set $r^{\max}_m=-\infty$.
\end{enumerate}
\end{definition}

The following lemma, which easily follows from the definitions, provides the geometric picture to be kept in mind for the rest of this paper.
\begin{lemma}\label{lemma:r+m}
Assume that $(J,\la,\chi)$ is essentially bounded and nontrivial. Let $m\in\overline{\Lambda^{\infty}}(J)$.
\begin{enumerate}
\item
If $m\neq m_{\min},m_{\max}$, then for each $\epsilon>0$, there is some finite subset $I_{\epsilon}\subset J_m$ such that $$D(J_m\setminus I_{\epsilon})\subseteq [r^{\min}_m-\epsilon,r^{\max}_m+\epsilon].$$
\item
If $m=m_{\min}$ and $r^{\min}_m\neq\infty$, then for each $\epsilon>0$, there is some finite subset $I_{\epsilon}\subset J_m$ such that 
$$D(J_m\setminus I_{\epsilon})\subseteq [r^{\min}_m-\epsilon,\infty[.$$
\item
If $m=m_{\max}$ and $r^{\max}_m\neq-\infty$, then for each $\epsilon>0$, there is some finite subset $I_{\epsilon}\subset J_m$ such that 
$$D(J_m\setminus I_{\epsilon})\subseteq \thinspace]\!-\infty,r^{\max}_m+\epsilon].$$
\end{enumerate}
\end{lemma}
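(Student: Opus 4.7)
The plan is to prove all three parts by a contradiction argument, in each case producing an accumulation point of $J_m$ (in the sense of Definition~\ref{definition:accumulation_point}) that violates the extremality of $r^{\min}_m$ or $r^{\max}_m$. I will give the argument in detail for Case~(1); Cases~(2) and~(3) then follow by restricting the same argument to one side.

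For Case~(1), I would fix $\epsilon>0$ and introduce the ``bad'' sets
$$A_\epsilon:=\{j\in J_m \mid d_j>r^{\max}_m+\epsilon\},\qquad B_\epsilon:=\{j\in J_m \mid d_j<r^{\min}_m-\epsilon\}.$$
It suffices to show both $A_\epsilon$ and $B_\epsilon$ are finite, since then $I_\epsilon:=A_\epsilon\cup B_\epsilon$ does the job. Assuming for contradiction that $A_\epsilon$ is infinite, the hypothesis $m\neq m_{\min},m_{\max}$ together with Definition~\ref{def:locally_bounded} forces $D(J_m)$ to be bounded on both sides, so $D(A_\epsilon)$ lies in a bounded interval. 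I would then split into two subcases: if $D(A_\epsilon)$ is an infinite set, Bolzano--Weierstrass yields a topological accumulation point $r\geq r^{\max}_m+\epsilon$ of $D(A_\epsilon)$; if $D(A_\epsilon)$ is finite, the pigeonhole principle gives some value $r>r^{\max}_m$ attained on an infinite subset of $A_\epsilon$. In both subcases, clause~(a) or clause~(b) of Definition~\ref{definition:accumulation_point} produces an accumulation point of $J_m$ strictly larger than $r^{\max}_m$, contradicting the very definition of $r^{\max}_m$. A symmetric argument, invoking the minimality of $r^{\min}_m$ instead, shows that $B_\epsilon$ is finite.

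Case~(2) is the one-sided restriction of the above: essential boundedness supplies only a lower bound on $D(J_m)$ for $m=m_{\min}$, but the assumption $r^{\min}_m\neq\infty$ guarantees that $r^{\min}_m$ is a genuine real accumulation point whose minimality can be violated. No upper-side boundedness is available, nor needed, since only sets of the $B_\epsilon$-type must be bounded. Case~(3) is the mirror image. The only point requiring a little care is the two-clause structure of Definition~\ref{definition:accumulation_point}, which must be invoked in the finite-image subcase in order to convert ``infinitely many indices with the same $d$-value'' into an accumulation point. I do not anticipate any serious obstacle beyond this bookkeeping, which is consistent with the assertion in the paper that the lemma ``easily follows from the definitions''.
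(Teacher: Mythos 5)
Your argument is correct, and the paper itself gives no proof of this lemma beyond the remark that it ``easily follows from the definitions,'' so your bookkeeping — splitting each ``bad'' set $A_\epsilon$ or $B_\epsilon$ into the cases where its $D$-image is infinite (Bolzano--Weierstrass) or finite (pigeonhole), in each case producing an accumulation point of $J_m$ in the sense of Definition~\ref{definition:accumulation_point} that violates the extremality of $r^{\max}_m$ or $r^{\min}_m$ — is precisely the intended elementary verification.
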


Before proceeding with the study of the PEC, we need to introduce one more concept. 
\begin{definition}\label{definition:summable}
Given $r\in\RR$, we call a subset $I_r$ of $J$ of the form $J_n^{<r}$ or $J_n^{>r}$ \emph{summable} (with respect to $(J,\la,\chi)$) if
$$\Sigma(I_r):=\sum_{j\in I_r}{|d_j-r|}<\infty.$$
\end{definition}

\section{Consequences of the PEC for $W(A_J)$}
In this section, we fix some infinite set $J$ and some tuples $\la=(\la_j)_{j\in J}\in\RR^{J}$ and $\chi=(d_j)_{j\in J}\in\RR^{J}$, and we assume that $(J,\la,\chi)$ satisfies the PEC for $W=W(A_J)$. 

\begin{lemma}\label{lemma 1}
Let $m,n\in\RR$ with $m<n$.
\begin{enumerate}
\item
If $J_m^{<r}$ and $J_n^{>r}$ are both nonempty for some $r\in\RR$, then $D(J_m^{<r})$ and $D(J_n^{>r})$ are bounded. 
\item
If $J_m$ and $J_n$ are both nonempty, then $D(J_m)$ (resp. $D(J_n)$) is bounded below (resp. above).
\end{enumerate}
\end{lemma}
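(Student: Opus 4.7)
The plan is to apply the PEC to single transpositions and read off both assertions from formula \eqref{eqn:prod_transp}. Since $W(A_J) = S_{(J)}$ contains no sign changes, transpositions are essentially the only tool available in this case, and indeed they suffice.

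For part (1), I would fix an element $j_0 \in J_n^{>r}$ and let $i$ range over $J_m^{<r}$. The transposition $w = (i, j_0) \in S_{(J)}$ applied to \eqref{eqn:prod_transp} gives
$$\la(w.\chi - \chi) = (\la_{j_0} - \la_i)(d_i - d_{j_0}) = (n - m)(d_i - d_{j_0}).$$
Since $n - m > 0$, the PEC (applied to all such $i$) bounds $d_i - d_{j_0}$ from below uniformly in $i$, hence $D(J_m^{<r})$ is bounded below; it is bounded above by $r$ by the very definition of $J_m^{<r}$. Swapping roles, fixing $i_0 \in J_m^{<r}$ and letting $j$ range over $J_n^{>r}$, the same formula yields $(n - m)(d_{i_0} - d_j)$ bounded below, so $d_j$ is bounded above; it is bounded below by $r$, giving boundedness of $D(J_n^{>r})$.

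For part (2), I would run the same single-transposition argument, but this time without the auxiliary constraints $d_i < r$ or $d_j > r$. Fixing $j_0 \in J_n$ and varying $i \in J_m$, the PEC applied to the transposition $(i, j_0)$ forces $d_i$ to be bounded below, so $D(J_m)$ is bounded below; symmetrically, fixing $i_0 \in J_m$ and varying $j \in J_n$ gives $D(J_n)$ bounded above.

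I do not anticipate any real obstacle. The only subtle point is keeping track of which direction of boundedness comes from which ingredient: in part (1) the two-sided boundedness combines the set-theoretic constraint defining $J_m^{<r}$ (or $J_n^{>r}$) with the one-sided bound coming from the PEC, whereas in part (2) only the PEC contributes and the conclusion is genuinely one-sided. Once that bookkeeping is in place, each case is a one-line computation from \eqref{eqn:prod_transp}.
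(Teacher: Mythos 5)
Your proof is correct and uses the same essential ingredient as the paper's, namely applying the PEC to single transpositions via formula~\eqref{eqn:prod_transp}. The only cosmetic differences are that you argue directly rather than by contradiction in part (1), and you prove part (2) by running the transposition argument again instead of reducing it to part (1) as the paper does; both variations are sound.
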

\begin{proof}
To prove (1), assume for a contradiction that $D(J_m^{<r})$ is unbounded, and for each $k\in\NN$, let $j_k\in J_m^{<r}$ such that $d_{j_k}<-k$. Pick any $j\in J_n^{>r}$. Consider for each $k\in\NN$ the transposition $\tau_k\in W$ exchanging $j$ and $j_k$. It then follows from (\ref{eqn:prod_transp}) that $$\la(\tau_k.\chi-\chi)=(\la_j-\la_{j_k})(d_{j_k}-d_j)=(n-m)(d_{j_k}-d_j)<-(n-m)(k+d_j)$$ for all $k\in\NN$, contradicting the PEC for $W$. The proof that $D(J_n^{>r})$ is bounded is similar.

We now turn to the proof of (2). Assume that $J_m$ and $J_n$ are nonempty. We prove that $D(J_m)$ is bounded below, the proof for $D(J_n)$ being similar. Let $r\in\RR$ be such that $J_n^{>r}$ is nonempty. If $J_m^{<r}$ is empty, then $\inf D(J_m)\geq r$. If $J_m^{<r}$ is nonempty, then $D(J_m^{<r})$ is bounded by (1), and hence $D(J_m)$ is bounded below, as desired.
\end{proof}

\begin{lemma}\label{lemma 2}
Let $m,n\in\RR$ with $m<n$, and let $r\in\RR$. Assume that $J_n^{>r}$ is infinite. Then $J_m^{<t}$ is finite for all $t<r$.
\end{lemma}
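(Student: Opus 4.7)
The plan is to argue by contradiction, producing a sequence of Weyl group elements whose action on $\chi$ forces $\lambda(W.\chi - \chi)$ to be unbounded below, contradicting the PEC. This mirrors the strategy used in Lemma~\ref{lemma 1}, but now using a growing product of transpositions rather than a single one.

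Suppose for contradiction that $J_m^{<t}$ is infinite for some $t<r$. Since $m<n$, the sets $J_m$ and $J_n$ are disjoint, so the hypothesis that $J_n^{>r}$ is infinite and the assumption that $J_m^{<t}$ is infinite allow me to choose, for each $K\in\NN$, two disjoint finite subsets $\{i_1,\dots,i_K\}\subseteq J_n^{>r}$ and $\{j_1,\dots,j_K\}\subseteq J_m^{<t}$. For each such $K$, let $w=w_K\in S_{(J)}=W(A_J)$ be the product of the $K$ transpositions $\tau_s$ swapping $i_s$ with $j_s$. By formula (\ref{eqn:prod_transp}),
\begin{equation*}
\lambda(w_K.\chi-\chi)=\sum_{s=1}^K (\lambda_{j_s}-\lambda_{i_s})(d_{i_s}-d_{j_s})=(m-n)\sum_{s=1}^K (d_{i_s}-d_{j_s}).
\end{equation*}
Since $d_{i_s}>r$ and $d_{j_s}<t$, each summand $d_{i_s}-d_{j_s}$ exceeds $r-t>0$, while the scalar $m-n$ is strictly negative. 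Hence
\begin{equation*}
\lambda(w_K.\chi-\chi)\leq K(m-n)(r-t),
\end{equation*}
and letting $K\to\infty$ drives the left-hand side to $-\infty$, contradicting the PEC for $W(A_J)$.

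There is no real obstacle here: the only point requiring care is the disjointness of the two index sets, which is automatic from $\lambda_{i_s}=n\neq m=\lambda_{j_s}$ so that the transpositions $\tau_s$ have pairwise disjoint supports and multiply to an honest element of $S_{(J)}$; this is exactly the setting in which formula (\ref{eqn:prod_transp}) applies. The key conceptual input is simply that by allowing products of arbitrarily many transpositions (rather than one at a time, as in Lemma~\ref{lemma 1}), even a uniform lower bound on the individual contributions $(m-n)(d_{i_s}-d_{j_s})<(m-n)(r-t)<0$ is already enough to violate the PEC.
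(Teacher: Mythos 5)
Your proof is correct and follows essentially the same argument as the paper: assume $J_m^{<t}$ is infinite, construct a product $w_K$ of $K$ disjoint transpositions pairing elements of $J_n^{>r}$ with elements of $J_m^{<t}$, and apply formula~\eqref{eqn:prod_transp} to obtain $\lambda(w_K.\chi-\chi)\leq K(m-n)(r-t)\to-\infty$, contradicting the PEC. The only cosmetic difference is the swap of the index labels $i_s$ and $j_s$ relative to the paper's choice.
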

\begin{proof}
Assume for a contradiction that $J_m^{<t}$ is infinite for some $t<r$. Let $\{i_1,i_2,\dots\}$ (resp. $\{j_1,j_2,\dots\}$) be an infinite countable subset of $J_m^{<t}$ (resp. $J_n^{>r}$). For each $k\in\NN$, let $w_k\in W$ be the product of the $k$ transpositions $\tau_1,\dots,\tau_k$, where $\tau_s$ interchanges $i_s$ and $j_s$ ($s\in\NN$). It then follows from (\ref{eqn:prod_transp}) that $$\la(w_k.\chi-\chi)=\sum_{s=1}^{k}{(\la_{j_s}-\la_{i_s})(d_{i_s}-d_{j_s})}=(n-m)\sum_{s=1}^{k}{(d_{i_s}-d_{j_s})}<-(n-m)(r-t)k$$ for all $k\in\NN$, contradicting the PEC for $W$.
\end{proof}

\begin{lemma}\label{lemma 3}
Let $m,n\in\RR$ with $m<n$. Then there exists at most one $r\in\RR$ such that $J_m^{<r}$ and $J_n^{>r}$ are both infinite.
\end{lemma}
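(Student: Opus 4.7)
The statement is essentially an immediate consequence of Lemma~\ref{lemma 2}, so the plan is very short. Suppose for contradiction that there exist two distinct real numbers $r_1 < r_2$ such that, for $i=1,2$, both $J_m^{<r_i}$ and $J_n^{>r_i}$ are infinite. I would focus attention on the ``extremal'' pair: the infinite set $J_n^{>r_2}$ (on the larger value) together with the infinite set $J_m^{<r_1}$ (on the smaller value), noting that these witness a wide gap between $d$-values on $J_m$ and $d$-values on $J_n$.

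Now apply Lemma~\ref{lemma 2} with the choice $r := r_2$ and $t := r_1 < r_2$. Since $J_n^{>r_2}$ is infinite and $m < n$, the lemma yields that $J_m^{<t} = J_m^{<r_1}$ is finite. This directly contradicts the assumption that $J_m^{<r_1}$ is infinite, completing the argument. Symmetrically one could instead apply (the obvious mirror of) Lemma~\ref{lemma 2} to $J_m^{<r_1}$, but a single invocation suffices.

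There is really no obstacle here: the entire content of Lemma~\ref{lemma 3} is packaged into Lemma~\ref{lemma 2}, whose proof already exploited the product-of-transpositions trick (\ref{eqn:prod_transp}) to drive $\lambda(w_k.\chi-\chi)$ to $-\infty$. The only thing to be careful about is the strictness of inequalities (we need $r_1 < r_2$ strictly, which is the case since $r_1 \ne r_2$ and we may relabel), so that $r_1$ can legitimately play the role of $t$ in the statement of Lemma~\ref{lemma 2}.
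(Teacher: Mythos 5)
Your argument is correct and coincides exactly with the paper's proof: both invoke Lemma~\ref{lemma 2} with $r=r_2$ and $t=r_1$ to conclude that $J_m^{<r_1}$ must be finite, contradicting the assumption. Nothing further to add.
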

\begin{proof}
Assume that $J_m^{<r}$ and $J_n^{>r}$ are both infinite for two different values of $r$, say $r_1<r_2$. Using Lemma~\ref{lemma 2} with $r=r_2$ and $t=r_1$ then yields the desired contradiction.
\end{proof}

\begin{lemma}\label{lemma:extremal_values}
If $(J,\la,\chi)$ satisfies the PEC for $W(A_J)$, it is essentially bounded. 
\end{lemma}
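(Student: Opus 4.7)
The plan is to derive this directly from Lemma~\ref{lemma 1}(2), which already does the heavy lifting: it asserts that whenever $J_m$ and $J_n$ are both nonempty with $m<n$, then $D(J_m)$ is bounded below and $D(J_n)$ is bounded above. So the only thing to check is that whenever the definition of ``essentially bounded'' requires bounding $D(J_m)$ from below (resp.\ above), one can always find an $n\in\Lambda(J)$ strictly greater than $m$ (resp.\ strictly less than $m$), so that Lemma~\ref{lemma 1}(2) applies.

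Concretely, fix $m\in\Lambda(J)$. If $m\neq m_{\max}$, then $m<m_{\max}=\sup\Lambda(J)$, so by the definition of a supremum there exists $n\in\Lambda(J)$ with $m<n\leq m_{\max}$; in particular $J_m$ and $J_n$ are both nonempty, and Lemma~\ref{lemma 1}(2) gives that $D(J_m)$ is bounded below. Symmetrically, if $m\neq m_{\min}$, then $m>m_{\min}=\inf\Lambda(J)$, so there exists $n\in\Lambda(J)$ with $m_{\min}\leq n<m$; again Lemma~\ref{lemma 1}(2) applies and yields that $D(J_m)$ is bounded above. Combining both cases shows that $(J,\la,\chi)$ is essentially bounded in the sense of Definition~\ref{def:locally_bounded}.

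There is essentially no obstacle here: the substance of the statement has already been absorbed into Lemma~\ref{lemma 1}(2), and the present lemma is just the reformulation of that fact in the language of Definition~\ref{def:locally_bounded}. The only subtle point is to keep straight that $m_{\min},m_{\max}$ are infima/suprema (possibly not attained, possibly $\pm\infty$), so one should invoke them through the characterisation ``$m<m_{\max}$ implies there is some $n\in\Lambda(J)$ with $m<n$'' rather than through attainment of the extrema.
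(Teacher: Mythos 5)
Your proof is correct and follows essentially the same route as the paper's, reducing the claim directly to Lemma~\ref{lemma 1}(2) by producing some $n\in\Lambda(J)$ on the appropriate side of $m$. The only cosmetic difference is that the paper first dispatches the trivial case $m\notin\Lambda^{\infty}(J)$ (where $J_m$ is finite, so $D(J_m)$ is automatically bounded), but as you observe, that shortcut is not needed since Lemma~\ref{lemma 1}(2) applies regardless.
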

\begin{proof}
Let $m\in\Lambda(J)$. If $m\notin \Lambda^{\infty}(J)$, then $D(J_m)$ is finite, hence bounded, and there is nothing to prove. Assume now that $m\in\Lambda^{\infty}(J)$. If $m\neq m_{\max}$, then there is some $n\in\Lambda(J)$ such that $m<n$. Since $J_{n}\neq \emptyset$, Lemma~\ref{lemma 1}(2) implies that $D(J_m)$ is bounded below, as desired. Similarly, if $m\neq m_{\min}$, so that there exists some $n\in\Lambda(J)$ with $m>n$, Lemma~\ref{lemma 1}(2) implies that $D(J_m)$ is bounded above, proving the claim.
\end{proof}

\begin{prop}\label{prop:3cases}
Assume that $(J,\la,\chi)$ satisfies the PEC for $W=W(A_J)$. Let $m,n\in \overline{\Lambda^{\infty}}(J)$ be such that $m<n$. Then one of the following assertions holds.
\begin{enumerate}
\item
$r^{\max}_n<r^{\min}_m$. In this case, there is some $r\in\RR$ such that $J_m^{<r}$ and $J_n^{>r}$ are both finite. 
\item
$r^{\max}_n=r^{\min}_m$. In this case, $J_m^{<r^{\min}_m}$ and $J_n^{>r^{\max}_n}$ are both summable. 
\end{enumerate}
\end{prop}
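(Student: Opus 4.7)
The plan has three parts: first establish the dichotomy $r_n^{\max} \leq r_m^{\min}$, so that the two stated cases are exhaustive; second, deduce the additional finiteness claim in case (1); third, prove the summability claim in case (2) by a combinatorial argument with Weyl group elements.

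For the dichotomy, I argue by contradiction: suppose $r_n^{\max} > r_m^{\min}$ and pick $r_m^{\min} < r_1 < r_2 < r_n^{\max}$. The fact that $r_m^{\min}$ is an accumulation point of $J_m$, unpacked via the two clauses of Definition~\ref{definition:accumulation_point} (either a sequence of distinct $d$-values in $D(J_m)$ converges to $r_m^{\min}$, or an infinite fibre $D^{-1}(r_m^{\min})$ lies in $J_m$), forces $J_m^{<r_1}$ to be infinite; symmetrically $J_n^{>r_2}$ is infinite. But then Lemma~\ref{lemma 2}, applied with $r = r_2$ and $t = r_1$, says $J_m^{<r_1}$ must be finite, a contradiction. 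For case (1), with $r_n^{\max} < r_m^{\min}$, pick any $r \in (r_n^{\max}, r_m^{\min})$: were $J_m^{<r}$ infinite, the set $D(J_m^{<r})$ (bounded by Lemma~\ref{lemma:extremal_values}) would produce, via Bolzano--Weierstrass or via an infinite fibre, an accumulation point of $J_m$ in $(-\infty, r]$, contradicting the minimality of $r_m^{\min} > r$. The argument for $J_n^{>r}$ is symmetric.

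For case (2), set $r := r_m^{\min} = r_n^{\max}$. I prove summability of $J_m^{<r}$ by contradiction, the argument for $J_n^{>r}$ being symmetric. Assume $\Sigma(J_m^{<r}) = \infty$ and fix an arbitrary $N > 0$. First choose finitely many $i_1, \ldots, i_k \in J_m^{<r}$ with $\sum_{s=1}^{k}(r - d_{i_s}) \geq N$; then, exploiting that $r$ is an accumulation point of $J_n$, pick $k$ distinct $j_1, \ldots, j_k \in J_n$ with $\sum_{s=1}^{k}|d_{j_s} - r| \leq 1$. Formula~\eqref{eqn:prod_transp}, applied to the product of transpositions $w = \tau_1 \cdots \tau_k$ with $\tau_s$ swapping $i_s$ and $j_s$, yields
$$\la(w.\chi - \chi) = (n-m)\sum_{s=1}^{k}(d_{i_s} - d_{j_s}) \leq -(n-m)(N - 1),$$
which, as $N \to \infty$, contradicts the PEC. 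The main obstacle is this last step: one must simultaneously extract a large partial sum of deviations from $J_m^{<r}$ (immediate from non-summability) and find enough elements of $J_n$ clustered arbitrarily close to $r$. The latter point rests on the observation that $r_n^{\max}$ being an accumulation point of $J_n$ (either via a topological accumulation of $D(J_n)$ or via an infinite fibre $D^{-1}(r) \cap J_n$) does indeed supply, for any $\varepsilon > 0$, arbitrarily many distinct elements $j \in J_n$ with $|d_j - r| < \varepsilon$.
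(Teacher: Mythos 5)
Your proof is correct, and it reaches the conclusion with the same core tools as the paper---formula \eqref{eqn:prod_transp} for products of transpositions combined with the PEC, and accumulation points---but with a somewhat different organization. For the dichotomy $r_n^{\max}\leq r_m^{\min}$ you argue directly from Lemma~\ref{lemma 2}, whereas the paper goes through Lemma~\ref{lemma 3} (itself a corollary of Lemma~\ref{lemma 2}); this is essentially a matter of packaging, as is your Bolzano--Weierstrass argument for the finiteness claim in case~(1) versus the paper's invocation of Lemma~\ref{lemma:r+m}. The more substantive difference is in case~(2): the paper splits into two sub-cases according to whether $J_n^{>r}$ is finite or infinite, and uses two slightly different transposition constructions, while you observe that since $r=r_m^{\min}=r_n^{\max}$ must be finite in this case, $r$ is a genuine accumulation point for \emph{both} fibres $J_m$ and $J_n$, so each summability claim can be proved independently by the same contradiction scheme: pair a finite family with large total deviation in one fibre against a finite family in the other fibre clustered arbitrarily near $r$. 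This unifies the paper's two sub-cases into one argument and is arguably the cleaner route; the paper's version buys a slightly sharper computation of the infimum (rather than just unboundedness) in the infinite--infinite sub-case, but that extra precision is not needed for the statement.
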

\begin{proof}
Note first that $(J,\la,\chi)$ is essentially bounded by Lemma~\ref{lemma:extremal_values} and nontrivial by hypothesis, so that $r^{\min}_m$ and $r^{\max}_n$ are defined. 

If $m=m_{\min}$ and $r^{\min}_m=\infty$, so that $D(J_m)$ is bounded below and $J_m$ has no accumulation point, then $J_m^{<r}$ is finite for any $r\in\RR$. Since in addition $D(J_n)$ is bounded above, so that $J_n^{>r}$ is finite for some large enough $r$, the statement (1) is satisfied. Similarly, if $n=m_{\max}$ and $r^{\max}_n=-\infty$, the statement (1) is satisfied, and we may thus assume from now on that $r^{\min}_m, r^{\max}_n\in\RR$.

We distinguish three cases.

\noindent
{\bf\underline{Case 1}:} $r^{\max}_n<r^{\min}_m$.

It then follows from Lemma~\ref{lemma:r+m} that $J_m^{<r}$ and $J_n^{>r}$ are both finite for any $r\in\RR$ with $r^{\max}_n<r<r^{\min}_m$. Hence (1) is satisfied in this case.

\noindent
{\bf\underline{Case 2}:} $r^{\max}_n=r^{\min}_m$.

Set $r=r^{\max}_n=r^{\min}_m$. We now prove that $J_m^{<r}$ and $J_n^{>r}$ must be both summable, so that (2) is satisfied.
By Lemma~\ref{lemma:r+m}, the sets $J_m^{<r-1/k}$ and $J_n^{>r+1/k}$ are finite for each $k\in\NN$, so that $J_m^{<r}=\bigcup_{k\in\NN}{J_m^{<r-1/k}}$ and $J_n^{>r}=\bigcup_{k\in\NN}{J_n^{>r+1/k}}$ are at most countable.

If $J_m^{<r}$ and $J_n^{>r}$ are both finite, there is nothing to prove. Assume now that at least one of $J_m^{<r}$ and $J_n^{>r}$ is infinite, say $J_m^{<r}$ (the other case being similar). Write $J_m^{<r}=\{i_1,i_2,\dots\}$. We distinguish two cases.

Assume first that $J_n^{>r}$ is infinite. Write $J_n^{>r}=\{j_1,j_2,\dots\}$.
For each $k\in\NN$, let $w_k\in W$ be the product of the transpositions $\tau_1,\dots,\tau_k$, where $\tau_s$ is the transposition exchanging $i_s$ and $j_s$ ($s\in\NN$). It then follows from (\ref{eqn:prod_transp}) that 
$$\la(w_k.\chi-\chi)=\sum_{s=1}^{k}{(\la_{i_s}-\la_{j_s})(d_{j_s}-d_{i_s})}=(m-n)\sum_{s=1}^{k}{(d_{j_s}-d_{i_s})}.$$
In particular,
$$\inf_{k\in\NN}\{\la(w_k.\chi-\chi)\}=-(n-m)\sum_{s=1}^{\infty}{(d_{j_s}-r+r-d_{i_s})}=-(n-m)(\Sigma(J_n^{>r})+\Sigma(J_m^{<r})).$$
Hence $J_m^{<r}$ and $J_n^{>r}$ must be both summable, as desired.

Assume next that $J_n^{>r}$ is finite. In particular $J_n^{>r}$ is summable, and it thus remains to show that $J_m^{<r}$ is also summable. Fix some $\epsilon>0$ and some sequence $(\epsilon_s)_{s\in\NN}$ of positive real numbers such that $\sum_{s\in\NN}{\epsilon_s}<\epsilon$. Since $r$ is an accumulation point for $J_n$, there is some infinite countable subset $\{j_1,j_2,\dots\}\subseteq J_n$ such that $d_{j_s}\geq\max(d_{i_s},r-\epsilon_s)$ for each $s\in\NN$. 
For each $k\in\NN$, let $w_k\in W$ be the product of the transpositions $\tau_1,\dots,\tau_k$, where $\tau_s$ is the transposition exchanging $i_s$ and $j_s$ ($s\in\NN$).  
It then follows from (\ref{eqn:prod_transp}) that
$$\la(w_k.\chi-\chi)=\sum_{s=1}^{k}{(\la_{i_s}-\la_{j_s})(d_{j_s}-d_{i_s})}=(m-n)\sum_{s=1}^{k}{(d_{j_s}-d_{i_s})}.$$
In particular,
$$\inf_{k\in\NN}\{\la(w_k.\chi-\chi)\}=-(n-m)\sum_{s=1}^{\infty}{(d_{j_s}-r+r-d_{i_s})}\leq -(n-m)(-\epsilon+\Sigma(J_m^{<r})).$$
Hence $J_m^{<r}$ must be summable, as desired.

\medskip
\noindent
{\bf\underline{Case 3}:} $r^{\max}_n>r^{\min}_m$.

Let $r\in\RR$ be such that $r^{\min}_m<r<r^{\max}_n$. Since $r^{\min}_m$ is an accumulation point for $J_m$, the set $J_m^{<r}$ is infinite. Similarly, since $r^{\max}_n$ is an accumulation point for $J_n$, the set $J_n^{>r}$ is infinite. Since there are infinitely many $r\in\RR$ with $r^{\min}_m<r<r^{\max}_n$, Lemma~\ref{lemma 3} then yields a contradiction in this case.
This concludes the proof of the proposition.
\end{proof}

\section{Characterisation of the PEC for $\la$ bounded and discrete}\label{subsection:charact_PEC_loc_fin}
In this section, we let $J$ denote some infinite set and $\la=(\la_j)_{j\in J}$ and $\chi=(d_j)_{j\in J}$ some tuples in $\RR^J$. 
In order to characterise the PEC for the triple $(J,\la,\chi)$, we will need to make some finiteness assumption on $\la$.
\begin{definition}\label{definition:discrete_bounded}
We call $\la$ \emph{bounded} (resp. \emph{discrete}) if the subset $\Lambda(J)$ of $\RR$ is bounded (resp. discrete).
\end{definition}

\begin{definition}\label{definition:cones_fin}
Given a set $J$ and a tuple $\la=(\la_j)_{j\in J}\in\RR^J$, we define the following cones in $\RR^J$:
\begin{equation*}
\begin{aligned}
C_{\min}(\lambda,A_J)&=\big\{(d_j)_{j\in J}\in \RR^J\ \big| \  \forall i,j\in J: \ \la_i<\la_j\implies  d_i\geq d_j\big\},\\
C_{\min}(\lambda,B_J)&=\big\{(d_j)_{j\in J}\in \RR^J \ \big| \ \forall j\in J: \ \la_jd_j\leq 0\quad\textrm{and}\quad \forall i,j\in J: \ |\la_i|<|\la_j|\implies  |d_i|\leq |d_j|\big\}.\\
\end{aligned}
\end{equation*}
We also define the sub-vector space $\ell^1(J)=\big\{(d_j)_{j\in J}\in\RR^J  \ \big| \ \sum_{j\in J}{|d_j|}<\infty\big\}$ of $\RR^J$. 
\end{definition}

\begin{prop}\label{prop:minimality}
Let $X\in\{A,B\}$ and set $W=W(X_J)$. For a triple $(J,\la,\chi)$, the following are equivalent:
\begin{enumerate}
\item
$\inf\la(W.\chi-\chi)=0$, i.e., $\lambda(w.\chi - \chi) \geq 0$ for all $w \in W$.
\item
$\chi\in C_{\min}(\lambda,X_J)$.
\end{enumerate}
\end{prop}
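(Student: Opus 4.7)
The plan is to treat $X=A$ and $X=B$ in parallel, in each case deriving (2) from (1) by testing the PEC against a few explicit Weyl group elements, and deriving (1) from (2) by a finite-support reduction to the rearrangement inequality from the preceding lemma.

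For $X=A$, the direction (1)$\Rightarrow$(2) is immediate from (\ref{eqn:prod_transp}): applied to a single transposition $\tau_{ij}\in S_{(J)}$ it yields $(\la_j-\la_i)(d_i-d_j)\geq 0$, so $\la_i<\la_j$ forces $d_i\geq d_j$, which is the defining condition of $C_{\min}(\la,A_J)$. For (2)$\Rightarrow$(1), given $w\in S_{(J)}$, I pick a finite set $F\subseteq J$ containing $\operatorname{supp}(w)\cup w(\operatorname{supp}(w))$ and enumerate $F=\{j_1,\dots,j_n\}$ with $\la_{j_1}\leq\dots\leq\la_{j_n}$. Condition (2) then lets me refine this order so that also $d_{j_1}\geq\dots\geq d_{j_n}$, ties in the $\la$-values being irrelevant to either sum. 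The rearrangement inequality identifies $\sum_k\la_{j_k}d_{j_k}$ as the minimum of $\sum_k\la_{j_k}d_{w(j_k)}$ over all permutations of the $d$-values, so (\ref{eqn:basic_loc_fin}) gives $\la(w\inv.\chi-\chi)\geq 0$.

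For $X=B$, with $W=S_{(J)}\ltimes\{\pm 1\}^{(J)}$, the direction (1)$\Rightarrow$(2) requires three test elements. First, a sign change $\sigma^{(j)}$ with $\sigma^{(j)}_j=-1$ and all other components equal to $1$ gives, via (\ref{eqn:basic_loc_fin}), $\la(\sigma^{(j)}.\chi-\chi)=-2\la_jd_j$, forcing $\la_jd_j\leq 0$. Next, the transposition $\tau_{ij}$ yields $(\la_j-\la_i)(d_i-d_j)\geq 0$, and $\tau_{ij}$ composed with the sign change on $\{i,j\}$ yields $(\la_i+\la_j)(d_i+d_j)\leq 0$. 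A short sign analysis (one may assume $\la_j>0$ up to symmetry, in which case $|\la_i|<|\la_j|$ yields $\la_j\pm\la_i>0$, hence $d_j\leq d_i\leq -d_j$ with $d_j\leq 0$) combines these three relations into $|d_i|\leq|d_j|$ whenever $|\la_i|<|\la_j|$.

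The converse (2)$\Rightarrow$(1) in the $B$-case is the main point. Writing $a_j:=|\la_j|$ and $b_j:=|d_j|$, the sign condition $\la_jd_j\leq 0$ supplied by (2) gives the key identity $\la_jd_j=-a_jb_j$, while the comparison condition implies that on any finite subset $(a_j)$ and $(b_j)$ can be simultaneously sorted in non-decreasing order. For $w=\sigma w_0\inv\in W(B_J)$ with $w_0\in S_{(J)}$ and $\sigma\in\{\pm 1\}^{(J)}$, the pointwise bound $\la_j\sigma_jd_{w_0(j)}\geq -|\la_j|\,|d_{w_0(j)}|=-a_jb_{w_0(j)}$, combined with the rearrangement inequality $\sum_j a_jb_{w_0(j)}\leq \sum_j a_jb_j$, yields
$$\sum_j \la_j\sigma_jd_{w_0(j)}\;\geq\; -\sum_j a_jb_j\;=\;\sum_j \la_jd_j,$$
so $\la(\sigma w_0\inv.\chi-\chi)\geq 0$ by (\ref{eqn:basic_loc_fin}). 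The only real obstacle is recognising that the identity $\la_jd_j=-a_jb_j$ is exactly what absorbs the sign changes and reduces the $B_J$-argument to the same rearrangement estimate used for $A_J$.
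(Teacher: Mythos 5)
Your proof is correct and follows essentially the same strategy as the paper: in both directions the rearrangement inequality carries (2)$\Rightarrow$(1) after a simultaneous sort of a finite support window, and (1)$\Rightarrow$(2) is extracted by testing against a few explicit Weyl group elements. The only cosmetic deviation is in the $B_J$-case of (1)$\Rightarrow$(2), where you test the plain and fully-signed transpositions and do a short sign split, whereas the paper chooses a single signed transposition with $\sigma_i,\sigma_j$ tailored to $\lambda$ and $d$ to land directly on $(|\la_i|-|\la_j|)(|d_i|-|d_j|)<0$; the conclusions are identical.
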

\begin{proof}
We first deal with the case $X=A$. If $\chi\notin C_{\min}(\lambda,A_J)$, then there exist some $i,j\in J$ with $\la_i<\la_j$ and $d_i<d_j$. Denoting by $w\in W$ the transposition exchanging $i$ and $j$, we deduce from (\ref{eqn:prod_transp}) that
$$\la(w.\chi-\chi)=(\la_{j}-\la_{i})(d_{i}-d_{j})<0,$$
so that $\inf\la(W.\chi-\chi)<0$. 

Assume conversely that $\chi\in C_{\min}(\lambda,A_J)$ and let $w\in W=S_{(J)}$. Let $I$ be some finite subset of $J$ containing the support of $w$. By assumption, we may then write $I=\{i_1,\dots,i_k\}$ so that $$\la_{i_1}\leq\dots\leq\la_{i_k}\quad\textrm{and}\quad d_{i_1}\geq\dots\geq d_{i_k}.$$ Together with (\ref{eqn:basic_loc_fin}), the rearrangement inequality then implies that 
$$\la(w\inv.\chi-\chi)=\sum_{s=1}^k{\la_{i_s}(d_{w(i_s)}-d_{i_s})}\geq \sum_{s=1}^k{\la_{i_s}(d_{i_s}-d_{i_s})}=0.$$
Hence $\inf\la(W.\chi-\chi)=0$, as desired.

We next deal with the case $X=B$. If $\chi\notin C_{\min}(\lambda,B_J)$, then either there exists some $j\in J$ such that $\la_jd_j>0$, or else $\la_kd_k\leq 0$ for all $k\in J$ and there exist some $i,j\in J$ with $|\la_i|<|\la_j|$ and $|d_i|>|d_j|$. In the first case, denoting by $\sigma\in W$ the element of $\{\pm 1\}^{(J)}$ with support $\{j\}$, we deduce from (\ref{eqn:basic_loc_fin}) that 
$$\la(\sigma.\chi-\chi)=-2\la_jd_j<0.$$
In the second case, denoting by $w\in S_{(J)}$ the transposition exchanging $i$ and $j$, and by $\sigma=(\sigma_k)_{k\in J}$ an element of $\{\pm 1\}^{(J)}$ with support in $\{i,j\}$ satisfying $\sigma_i\la_id_j=-|\la_id_j|$ and $\sigma_j\la_jd_i=-|\la_jd_i|$, we deduce from (\ref{eqn:basic_loc_fin}) that
$$\la(\sigma w\inv.\chi-\chi)=\la_i(\sigma_id_j-d_i)+\la_j(\sigma_jd_i-d_j)=(|\la_i|-|\la_j|)(|d_i|-|d_j|)<0.$$
In both cases, we deduce that $\inf\la(W.\chi-\chi)<0$.

Assume conversely that $\chi\in C_{\min}(\lambda,B_J)$ and let $w\in S_{(J)}$ and $\sigma=(\sigma_j)_{j\in J}\in\{\pm 1\}^{(J)}$. Let $I$ be some finite subset of $J$ containing the supports of $w$ and $\sigma$. By assumption, we may then write $I=\{i_1,\dots,i_k\}$ so that $$|\la_{i_1}|\leq\dots\leq |\la_{i_k}|\quad\textrm{and}\quad |d_{i_1}|\leq\dots\leq |d_{i_k}|.$$
Since moreover $\la_{i_s}d_{i_s}=-|\la_{i_s}d_{i_s}|$ for all $s=1,\dots,k$, we deduce from (\ref{eqn:basic_loc_fin}) and the rearrangement inequality that
$$\la(\sigma w\inv.\chi-\chi)=\sum_{s=1}^k{\la_{i_s}(\sigma_{i_s}d_{w(i_s)}-d_{i_s})}\geq \sum_{s=1}^k{|\la_{i_s}|(-|d_{w(i_s)}|+|d_{i_s}|)}\geq \sum_{s=1}^k{|\la_{i_s}|(-|d_{i_s}|+|d_{i_s}|)}=0.$$
Hence $\inf\la(W.\chi-\chi)=0$, as desired.
\end{proof}

\begin{lemma}\label{lemma:invariance_summable}
Let $X\in\{A,B\}$. Assume that $\la$ is bounded and that $\chi\in \ell^1(J)$. Then $(J,\la,\chi)$ satisfies the PEC for $W(X_J)$. 
\end{lemma}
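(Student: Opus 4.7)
The plan is to prove something strictly stronger than the PEC: under the stated hypotheses, the entire set $\la(W.\chi-\chi)$ is actually bounded in $\RR$ on both sides, with an explicit bound depending only on $M := \sup_{j \in J} |\la_j|$ (finite by boundedness of $\la$) and $S := \sum_{j \in J} |d_j|$ (finite by the hypothesis $\chi \in \ell^1(J)$). The proof is a direct triangle-inequality estimate applied to the identity (\ref{eqn:basic_loc_fin}); no appeal to the structural results of the preceding section is needed.

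Concretely, I would recall from \S\ref{subsection:Weyl_group_Delta} that every element of $W(X_J)$ for $X \in \{A,B\}$ has the form $\sigma w\inv$ with $w \in S_{(J)}$ and $\sigma = (\sigma_j)_{j \in J} \in \{\pm 1\}^{(J)}$ of finite support (with $\sigma = 1$ in the $A_J$-case). Substituting into (\ref{eqn:basic_loc_fin}) gives
\[
\la(\sigma w\inv.\chi - \chi) = \sum_{j \in J} \la_j(\sigma_j d_{w(j)} - d_j),
\]
where only the finitely many indices $j$ in $\mathrm{supp}(\sigma)\cup\mathrm{supp}(w)$ contribute a nonzero summand, so there is no convergence issue. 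Estimating $|\sigma_j d_{w(j)} - d_j| \leq |d_{w(j)}| + |d_j|$ and using that $w$ is a bijection of $J$, so that the rearranged sum satisfies $\sum_{j \in J} |d_{w(j)}| = \sum_{j \in J} |d_j| = S$, one obtains
\[
\bigl|\la(\sigma w\inv.\chi - \chi)\bigr| \leq M \sum_{j \in J} \bigl(|d_{w(j)}| + |d_j|\bigr) = 2MS.
\]

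Thus $\la(W.\chi - \chi) \subseteq [-2MS,\,2MS]$, which is in particular bounded below, proving PEC for $W(X_J)$. There is no substantive obstacle in this argument: the lemma essentially records that a summable character contributes only a uniformly bounded perturbation to $\la(W.\chi - \chi)$, which is precisely the content that makes the decomposition $\chi = \chi_{\mi} + \chi_{\su}$ in Theorem~\ref{thm:mainintro} behave well — any $\chi_{\su} \in \ell^1$ can be added to a minimal-energy character without destroying positivity of the resulting energies up to an additive constant.
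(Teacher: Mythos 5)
Your proof is correct and matches the paper's argument essentially verbatim: both apply the triangle inequality to the expression in~(\ref{eqn:basic_loc_fin}) and use that $w$ is a bijection of $J$ to obtain the $-2MS$ lower bound. The only cosmetic difference is that you record the two-sided bound $|\la(\sigma w\inv.\chi-\chi)|\leq 2MS$, while the paper only states the lower bound it needs.
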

\begin{proof}
Write $\la=(\la_j)_{j\in J}$ and $\chi=(d_j)_{j\in J}$, so that $C:=\sum_{j\in J}{|d_j|}<\infty$. Set $M:=\sup_{j\in J}{|\la_j|}<\infty$. Then for all $\sigma=(\sigma_j)_{j\in J}\in\{\pm 1\}^{(J)}$ and $w\in S_{(J)}$, we have
$$\la(\sigma w\inv.\chi-\chi)=\sum_{j\in J}{\la_j(\sigma_jd_{w(j)}-d_j)}\geq -M\sum_{j\in J}{(|d_{w(j)}|+|d_j|)}=-2MC.$$
Since $\sigma$ and $w$ were arbitrary, this proves the claim.
\end{proof}

\begin{lemma}\label{lemma:invariance_summable2}
Let $X\in\{A,B\}$. Assume that $\la$ is bounded. Then for all $\chi'\in \ell^1(J)$, the triple $(J,\la,\chi)$ satisfies the PEC for $W(X_J)$ if and only if the triple $(J,\la,\chi+\chi')$ satisfies the PEC for $W(X_J)$.
\end{lemma}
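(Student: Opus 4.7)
The plan is to reduce the statement to Lemma~\ref{lemma:invariance_summable} by exploiting the $\RR$-linearity of the Weyl group action on the $\chi$-variable. Indeed, formula~\eqref{eqn:basic_loc_fin} shows that for every $\sigma\in\{\pm 1\}^{(J)}$ and $w\in S_{(J)}$, the map $\chi\mapsto\la(\sigma w\inv.\chi-\chi)$ from $\RR^J$ to $\RR$ is $\RR$-linear, so that for each $u:=\sigma w\inv\in W(X_J)$ one has
$$\la\bigl(u.(\chi+\chi')-(\chi+\chi')\bigr)=\la(u.\chi-\chi)+\la(u.\chi'-\chi').$$

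The next step is to control the ``error term'' $\la(u.\chi'-\chi')$ uniformly in $u$. Since $\chi'\in\ell^1(J)$ and $\la$ is bounded, Lemma~\ref{lemma:invariance_summable} (applied to $\chi'$) yields a uniform lower bound $\la(u.\chi'-\chi')\geq -2MC$, where $M:=\sup_{j\in J}|\la_j|$ and $C:=\sum_{j\in J}|d'_j|$. Applying the very same lemma to $-\chi'\in\ell^1(J)$ gives the matching upper bound $\la(u.\chi'-\chi')\leq 2MC$. Hence $|\la(u.\chi'-\chi')|\leq 2MC$ for all $u\in W(X_J)$.

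Combining the two observations, the displayed identity shows that $\inf_u\la(u.(\chi+\chi')-(\chi+\chi'))$ and $\inf_u\la(u.\chi-\chi)$ differ by at most $2MC$, so one is finite precisely when the other is. This is exactly the desired equivalence between the PEC for $(J,\la,\chi+\chi')$ and the PEC for $(J,\la,\chi)$. I do not foresee any real obstacle: the whole content is that ``$\ell^1$-perturbations of $\chi$ are invisible to the PEC,'' and it is already encoded in Lemma~\ref{lemma:invariance_summable}, the only trick being to apply it to both $\chi'$ and $-\chi'$ to promote the one-sided PEC bound into a two-sided uniform bound.
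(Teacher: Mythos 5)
Your proof is correct and follows essentially the same route as the paper. The paper uses the one-sided inequality $\inf\la\big(W.(\chi+\chi')-(\chi+\chi')\big)\geq \inf\la(W.\chi-\chi)+\inf\la(W.\chi'-\chi')$ together with Lemma~\ref{lemma:invariance_summable}, and obtains the converse direction by relabelling $\chi\mapsto\chi+\chi'$, $\chi'\mapsto-\chi'$; your version of promoting the one-sided PEC bound to a two-sided uniform bound by applying Lemma~\ref{lemma:invariance_summable} to both $\chi'$ and $-\chi'$ is the same symmetry, merely executed before passing to the infimum rather than after.
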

\begin{proof}
Set $W=W(X_J)$, and assume that $(J,\la,\chi)$ satisfies the PEC for $W$. Since
$$\inf\la\big(W.(\chi+\chi')-(\chi+\chi')\big)\geq \inf\la(W.\chi-\chi)+\inf\la(W.\chi'-\chi'),$$
the triple $(J,\la,\chi+\chi')$ then satisfies the PEC for $W$ by Lemma~\ref{lemma:invariance_summable}. Replacing $\chi$ by $\chi+\chi'$ and $\chi'$ by $-\chi'$, the converse follows.
\end{proof}

Note that, up to now, we only refered to the locally finite root systems of type $A_J$ and $B_J$. We now prove that these are indeed the only cases to be considered. 

\begin{lemma}\label{lemma:summability_at_0_B}
Assume that $(J,\la,\chi)$ satisfies the PEC for $W(B_J)$ or $W(D_J)$. Then
$$\sum_{j\in J_+}{|\la_jd_j|}<\infty, \quad\textrm{where $J_+:=\{j\in J \ | \ \la_jd_j>0\}$.}$$
\end{lemma}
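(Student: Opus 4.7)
The plan is to exploit the pure sign-change part of the Weyl groups $W(B_J)$ and $W(D_J)$. Both contain the subgroup $\{\pm 1\}^{(J)}$ (respectively $\{\pm 1\}^{(J)}_2$) coming from the second factor in the semidirect product description of $\S$\ref{subsection:Weyl_group_Delta}, and the identity~(\ref{eqn:basic_loc_fin}) with $w = \id$ reduces beautifully. Namely, for any $\sigma = (\sigma_j)_{j\in J}$ in the relevant sign group,
\begin{equation*}
\la(\sigma.\chi-\chi)=\sum_{j\in J}\la_j d_j(\sigma_j - 1) = -2\sum_{j\in I_\sigma}\la_j d_j,
\end{equation*}
where $I_\sigma := \{j \in J \mid \sigma_j = -1\}$ is the support of $\sigma$. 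Hence the PEC yields a constant $C \geq 0$ such that $\sum_{j \in I_\sigma}\la_j d_j \leq C/2$ for every admissible $\sigma$.

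In the $B_J$ case every finite subset $I \subseteq J$ arises as a support, so taking any finite $I \subseteq J_+$ and noting that $\la_j d_j = |\la_j d_j|$ on $J_+$ gives $\sum_{j\in I}|\la_j d_j| \leq C/2$. Passing to the supremum over all finite $I \subseteq J_+$ yields $\sum_{j \in J_+}|\la_j d_j| \leq C/2 < \infty$, as required.

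The only mild wrinkle is the $D_J$ case, where the support must have even cardinality. To handle odd cardinalities, I would fix once and for all an auxiliary index $j_0 \in J$ and, given a finite $I \subseteq J_+ \setminus \{j_0\}$ of odd cardinality, apply the estimate to the support $I \cup \{j_0\}$; this produces the slightly weaker bound $\sum_{j\in I}|\la_j d_j| \leq C/2 + |\la_{j_0}d_{j_0}|$, but that uniform bound (independent of $I$) still suffices to conclude $\sum_{j \in J_+ \setminus \{j_0\}}|\la_j d_j| < \infty$, and hence $\sum_{j \in J_+}|\la_j d_j| < \infty$. No genuine obstacle arises—the crux is simply the observation that the sign-flip computation isolates exactly the quantity $\sum \la_j d_j$ over an arbitrary finite set, so the PEC is immediately equivalent to a one-sided summability of $(\la_j d_j)_{j \in J}$; the positive part of that series is precisely the sum in the statement.
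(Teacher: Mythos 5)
Your proof is correct and follows essentially the same route as the paper: isolate the sign-change subgroup and observe that the PEC applied to $\sigma$ with support $I$ bounds $\sum_{j\in I}\la_j d_j$. The only cosmetic difference is in handling the even-cardinality constraint for $W(D_J)$: you absorb an auxiliary index $j_0$, while the paper first reduces $W(B_J)$ to $W(D_J)$ (since $W(D_J)\leq W(B_J)$) and then, after dispensing with the trivial case where $J_+$ is finite, simply notes that the supremum of $\sum_{i\in I}|\la_id_i|$ over finite even-cardinality $I\subseteq J_+$ already equals $\sum_{j\in J_+}|\la_jd_j|$ when $J_+$ is infinite.
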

\begin{proof}
Note first that since $W(D_J)=S_{(J)}\ltimes \{\pm 1\}_2^{(J)}\leq W(B_J)$, the PEC for $W(B_J)$ implies the PEC for $W(D_J)$. We may thus assume that $(J,\la,\chi)$ satisfies the PEC for $W(D_J)$. If $J_+$ is finite, there is nothing to prove. Assume now that $J_+$ is infinite. For each finite subset $I\subset J^+$ of even cardinality, let $\sigma_I\in \{\pm 1\}_2^{(J)}$ with support $I$. Then 
$$\inf\la(W(D_J).\chi-\chi)\leq \la(\sigma_I.\chi-\chi)=-2\sum_{i\in I}{\la_id_i}=-2\sum_{i\in I}{|\la_id_i|}.$$
Since $\sum_{j\in J^+}{|\la_jd_j|}$ is the supremum of all sums $\sum_{i\in I}{|\la_id_i|}$ with $I$ a finite subset of $J^+$ of even cardinality, the claim follows. 
\end{proof}

\begin{lemma}\label{lemma:D_J_condition_bis}
Assume that $(J,\la,\chi)$ satisfies the PEC for $W(D_J)$. Let $m,n\in\Lambda(J)$ with $|m|<|n|$. Then $D(J_{m})$ is bounded.
\end{lemma}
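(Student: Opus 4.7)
The plan is to exploit the PEC by testing it against two carefully chosen elements of $W(D_J)$ whose action on $\chi$ produces linear inequalities pinning $d_i$ for $i\in J_m$ from above and from below. The crucial algebraic input is that under the hypothesis $|m|<|n|$ one has $(n-m)(n+m)=n^2-m^2>0$, so that $n-m$ and $n+m$ are nonzero and of the \emph{same} sign.

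Fix some $j_0\in J_n$ (which exists since $n\in\Lambda(J)$; note that $j_0\neq i$ for any $i\in J_m$ because $m\neq n$). For each $i\in J_m$ I would evaluate formula~(\ref{eqn:basic_loc_fin}) at two elements of $W(D_J)$: first, the plain transposition $w=(i\,j_0)\in S_{(J)}\subseteq W(D_J)$; and second, $\sigma w$, where $\sigma$ is the sign flip of support $\{i,j_0\}$ --- of size $2$, hence in $\{\pm 1\}_2^{(J)}$, so that $\sigma w\in W(D_J)$. In (\ref{eqn:basic_loc_fin}) only the summands indexed by $i$ and $j_0$ survive, and a direct computation gives
\[
\la(w^{-1}.\chi-\chi)=(n-m)(d_i-d_{j_0})\quad\text{and}\quad \la(\sigma w^{-1}.\chi-\chi)=-(n+m)(d_i+d_{j_0}).
\]
By the PEC, both expressions are bounded below by constants independent of $i$. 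Since $n-m$ and $n+m$ share the same nonzero sign, the first inequality yields a one-sided bound on $d_i$ (above or below according to that common sign), while the second yields the opposite one-sided bound. Together they force $D(J_m)=\{d_i\ |\ i\in J_m\}$ to be bounded, as claimed.

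The only real obstacle worth flagging is that in $W(D_J)$ every sign flip must have even support, so one cannot simply flip the single sign at $i$ (which would be the natural move for $W(B_J)$). The resolution is to pair the sign flip at $i$ with a sign flip at the very index $j_0$ already used in the transposition: this keeps us inside $W(D_J)$, and makes the coefficient of $d_i$ in the second energy computation equal to $-(n+m)$, which is precisely what allows the hypothesis $|m|<|n|$ --- via its equivalent reformulation that $n-m$ and $n+m$ have the same sign --- to convert two one-sided bounds into a two-sided one.
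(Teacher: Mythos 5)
Your argument is correct: the computations for the transposition $w=(i\,j_0)$ and for $\sigma w$ with $\sigma$ the even sign flip of support $\{i,j_0\}$ are
\[
\la(w\inv.\chi-\chi)=(n-m)(d_i-d_{j_0}),\qquad
\la(\sigma w\inv.\chi-\chi)=-(n+m)(d_i+d_{j_0}),
\]
both elements lie in $W(D_J)$, and since $|m|<|n|$ forces $n-m$ and $n+m$ to be nonzero of the same sign, the two lower bounds coming from the PEC pin $d_i$ from both sides. Your route, however, is genuinely different from the paper's. The paper argues by contradiction: it picks an unbounded countable subfamily $\{i_0,i_1,\dots\}\subseteq J_m$, a single $j\in J_n$, and a one-parameter family of elements $\sigma_s w_s\inv$ in which the signs of $\sigma_s$ at the two active indices $j$ and $i_s$ are chosen \emph{adaptively} so that each surviving term in \eqref{eqn:basic_loc_fin} is as negative as possible, while a third (otherwise idle) index $i_0$ is dragged in purely as a parity fixer to keep the support even; the price is a bounded error term $2|md_{i_0}|$. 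Your direct proof sidesteps both the adaptive sign choice and the parity fixer: instead of cleverly choosing one sign pattern, you test \emph{both} the trivial and the two-element sign flip on the fixed pair $\{i,j_0\}$, and combine the two resulting linear inequalities. It avoids the contradiction framing and gives an explicit two-sided bound on $D(J_m)$ in terms of the PEC constant, $d_{j_0}$, $n-m$, and $n+m$. Both approaches buy essentially the same statement and share the key observation that $|m|<|n|$ makes the coefficients $n\pm m$ cooperate; yours is a clean alternative worth noting.
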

\begin{proof}
Assume for a contradiction that $D(J_{m})$ is unbounded, and choose some infinite countable set $\{i_0,i_1,\dots\}\subseteq J_{m}$ such that $|d_{i_s}|\geq s$ for all $s\in\NN$. Let also $j\in J_n$.
Fix some $s\in\NN$, and let $w\in S_{(J)}$ be the transposition exchanging $j$ and $i_s$.
Let also $\sigma=(\sigma_j)_{j\in J}\in\{\pm 1\}_2^{(J)}$ with support in $\{j,i_0,i_s\}$ be such that $m\sigma_{i_s}d_{j}=-|md_{j}|$ and $n\sigma_{j}d_{i_s}=-|nd_{i_s}|$. It then follows from (\ref{eqn:basic_loc_fin}) that 
\begin{equation*}
\begin{aligned}
\la(\sigma w\inv.\chi-\chi)&=m(\sigma_{i_0}d_{i_0}-d_{i_0})+m(\sigma_{i_s}d_{j}-d_{i_s})+n(\sigma_{j}d_{i_s}-d_{j})\\
&\leq 2|md_{i_0}|+|m|\cdot (-|d_{j}|+|d_{i_s}|)+|n|\cdot (-|d_{i_s}|+|d_{j}|)\\
&=2|md_{i_0}|-(|n|-|m|)(|d_{i_s}|-|d_{j}|)\\
&\leq 2|md_{i_0}|-(|n|-|m|)(s-|d_{j}|).\\
\end{aligned}
\end{equation*}
Hence $$\inf\la(W(D_J).\chi-\chi)\leq 2|md_{i_0}|-(|n|-|m|)(s-|d_{j}|).$$ As $s\in\NN$ was arbitrary, this contradicts the PEC for $W(D_J)$, as desired.
\end{proof}

\begin{lemma}\label{lemma:BCD}
Let $X\in\{B,C,D,BC\}$, and assume that $\la$ is bounded and discrete. Then $(J,\la,\chi)$ satisfies the PEC for $W(X_J)$ if and only if it satisfies the PEC for $W(B_J)$.
\end{lemma}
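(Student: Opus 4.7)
The plan is to exploit the index-$2$ inclusion $W(D_J)\leq W(B_J)$ together with the equalities $W(B_J)=W(C_J)=W(BC_J)$ recorded in \S\ref{subsection:Weyl_group_Delta}. The latter reduce the lemma immediately to proving that PEC for $W(D_J)\Leftrightarrow$ PEC for $W(B_J)$. The implication $W(B_J)\Rightarrow W(D_J)$ is trivial since $W(D_J)\subseteq W(B_J)$, so I would focus on the converse.

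For that converse, assume PEC for $W(D_J)$, so that $\la(h.\chi-\chi)\geq -K$ for all $h\in W(D_J)$ and some constant $K$. Fix any $j_*\in J$ and let $\tau:=\tau_{j_*}\in\{\pm 1\}^{(J)}$ be the sign change supported at $\{j_*\}$; having odd support it lies in $W(B_J)\setminus W(D_J)$, so the index-$2$ coset decomposition reads $W(B_J)=W(D_J)\sqcup W(D_J)\tau$. The remaining task is therefore to bound $\la(h\tau.\chi-\chi)$ from below uniformly in $h\in W(D_J)$. Setting $\xi:=\tau.\chi-\chi=-2d_{j_*}e_{j_*}$, linearity of the $W(B_J)$-action on $V$ gives
$$\la(h\tau.\chi-\chi)=\la(h.\chi-\chi)+\la(h.\xi),$$
with the first summand already controlled by the PEC hypothesis for $W(D_J)$.

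The final step is a uniform bound on $|\la(h.\xi)|$: writing $h=\sigma w^{-1}$, one has $h.e_{j_*}=\sigma_{w^{-1}(j_*)}e_{w^{-1}(j_*)}$, hence $|\la(h.e_{j_*})|=|\la_{w^{-1}(j_*)}|\leq M$ with $M:=\sup_{j\in J}|\la_j|<\infty$ by boundedness of $\la$. This yields $|\la(h.\xi)|\leq 2M|d_{j_*}|$, a constant independent of $h$, so that $\la(h\tau.\chi-\chi)\geq -K-2M|d_{j_*}|$ and PEC for $W(B_J)$ follows. I do not foresee a serious obstacle: the whole argument is driven by the index-$2$ coset structure together with the fact that $W(B_J)$ acts on $V$ by signed permutations, so flipping a single extra sign only affects $\la$ by a bounded amount. (The discreteness assumption on $\la$ is not actually used in this lemma; only boundedness enters.)
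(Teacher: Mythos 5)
Your argument is correct, and it takes a genuinely different---and considerably simpler---route than the paper. The paper also reduces at once to showing that PEC for $W(D_J)$ implies PEC for $W(B_J)$, but then handles this by cases. If $J$ has no accumulation point (in the sense of Definition~\ref{definition:accumulation_point}), the paper invokes Lemmas~\ref{lemma:extremal_values}, \ref{lemma:summability_at_0_B}, \ref{lemma:D_J_condition_bis} and Lemma~\ref{lemma:invariance_summable2} to reduce, modulo $\ell^1(J)$, to $\chi\in C_{\min}(\lambda,B_J)$, concluding via Proposition~\ref{prop:minimality}; this branch uses finiteness of $\Lambda(J)$, hence discreteness. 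If $J$ has an accumulation point $r$, the paper argues much as you do but chooses the auxiliary sign change $\tau$ \emph{adaptively}: for each $\sigma w^{-1}\in W(B_J)$ it picks $i$ near the accumulation point and outside the supports of $\sigma$ and $w$ (so that its termwise manipulation of the sum is valid), yielding the cost $2M(\lvert r\rvert+\epsilon)$. Your version avoids both complications by exploiting that $W(D_J)$ is normal of index $2$ in $W(B_J)$: with one \emph{fixed} coset representative $\tau=\tau_{j_*}$ and the clean identity $\lambda(h\tau.\chi-\chi)=\lambda(h.\chi-\chi)+\lambda\bigl(h.(\tau.\chi-\chi)\bigr)$, the extra cost is the uniform $2M\lvert d_{j_*}\rvert$, with no case distinction and no need to place $\tau$ outside any supports. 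As you observe, this also shows that discreteness of $\lambda$ is not used here---boundedness suffices---which is a modest strengthening of the lemma as proved in the paper.
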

\begin{proof}
For $X=B,C,BC$, there is nothing to prove. Since $W(D_J)\leq W(B_J)$, it is also clear that the PEC for $W(B_J)$ implies the PEC for $W(D_J)$. Assume now that $(J,\la,\chi)$ satisfies the PEC for $W(D_J)=S_{(J)}\ltimes \{\pm 1\}_2^{(J)}$, and let us prove that it also satisfies the PEC for $W(B_J)=S_{(J)}\ltimes \{\pm 1\}^{(J)}$. 

If $\Lambda(J) = \{m\}$ is a one-element set and $(J,\la,\chi)$ satisfies the 
PEC for $W(D_J)$, then 
\[ \inf \lambda(W(D_J).\chi-\chi) 
= \inf_{\textrm{$|F|$ even}} \Big\{ - 2 m \sum_{j \in F} d_j \Big\}> -\infty \] 
implies 
\[ \inf \lambda(W(B_J).\chi-\chi) 
= \inf_{F} \Big\{ - 2 m \sum_{j \in F} d_j \Big\}> -\infty, \]
where the above infima are taken over finite subsets $F$ of $J$. 
We therefore assume that $|\Lambda(J)| \geq 2$.

We distinguish two cases. Assume first that $J$ has no accumulation point. Since $\Lambda(J)$ is finite, there exists some $m\in\Lambda^{\infty}(J)$. Thus $D(J_m)$ is unbounded. Lemma~\ref{lemma:D_J_condition_bis} then implies that $|m|\geq |n|$ for all $n\in\Lambda(J)$. Similarly, if $n\in\Lambda^{\infty}(J)$, then $D(J_n)$ is unbounded and hence $|n|=|m|$. In other words,
$\Lambda^{\infty}(J)\subseteq\{\pm m\}$ and $|m|=\max\{|m_{\min}|,|m_{\max}|\}$. Thus either $\Lambda^{\infty}(J)=\{m\}\subseteq\{m_{\min},m_{\max}\}$ with $|m|=\max_{j\in J}{|\la_j|}$, or else $m\neq 0$ and $\Lambda^{\infty}(J)=\{\pm m\}=\{m_{\min},m_{\max}\}$.

Since $(J,\la,\chi)$ is essentially bounded by Lemma~\ref{lemma:extremal_values} 
and $|\Lambda(J)| \geq 2$, the sets $D(J_{m_{\max}})$ and $D(J_{m_{\min}})$ are respectively bounded above and below. 
As $J$ has no accumulation point, this implies that the sets $J_{m_{\max}}^{>0}$ and $J_{m_{\min}}^{<0}$ are finite. Note also that the set $$S=\{j\in J \ | \ \la_j\neq m_{\min},m_{\max}\}$$ is finite because $\Lambda^{\infty}(J)\subseteq\{m_{\min},m_{\max}\}$ and $\Lambda(J)$ is finite. In particular, the tuple $\chi'=(d_j')_{j\in J}$ defined by 
\begin{equation*}
d_j'=
\left\{
\begin{array}{ll}
d_j\quad &\textrm{if $j\in S\cup J_{m_{\max}}^{>0}\cup J_{m_{\min}}^{<0}$,}\\
0\quad &\textrm{otherwise}
\end{array}
\right.
\end{equation*}
belongs to $\ell^1(J)$. Thus by Lemma~\ref{lemma:invariance_summable2}, we may replace without loss of generality $\chi$ by $\chi-\chi'$. In other words, we may assume that
$$D(J_{m_{\min}})\subseteq [0,\infty), \quad D(J_{m_{\max}})\subseteq (-\infty,0], \quad \textrm{and}\quad D(J_n)=\{0\} \quad \textrm{for all $n\neq m_{\min},m_{\max}$.}$$
If $J_{m_{\min}}$ (resp. $J_{m_{\max}}$) is finite, we may, by a similar argument, assume that $D(J_{m_{\min}})=\{0\}$ (resp. $D(J_{m_{\max}})=\{0\}$). On the other hand, if $J_{m_{\min}}$ (resp. $J_{m_{\max}}$) is infinite, then $D(J_{m_{\min}})$ (resp. $D(J_{m_{\max}})$) is unbounded by hypothesis, and hence $m_{\min}\leq 0$ (resp. $m_{\max}\geq 0$) by Lemma~\ref{lemma:summability_at_0_B}. 

If $\Lambda^{\infty}(J)=\{m\}$ with $|m|=\max_{j\in J}{|\la_j|}$, we may thus assume that $mD(J_m)\subseteq (-\infty,0]$ and that $D(J_n)=\{0\}$ for all $n\neq m$. If $\Lambda^{\infty}(J)=\{\pm m\}=\{m_{\min},m_{\max}\}$ is a $2$-element set, 
we may similarly assume that $\pm mD(J_{\pm m})\subseteq (-\infty,0]$ and that $D(J_n)=\{0\}$ for all $n\neq \pm m$. In both cases, $\chi\in C_{\min}(\lambda,B_J)$. Hence $(J,\la,\chi)$ satisfies the PEC for $W(B_J)$ by Proposition~\ref{prop:minimality}.

We next assume that $J$ has some accumulation point $r\in\RR$. Set $M=\sup_{j\in J}{|\la_j|}$. Let $\epsilon>0$ and let $S$ be an infinite subset of $J$ such that $D(S)\subseteq [r-\epsilon,r+\epsilon]$.
Let $\sigma=(\sigma_j)_{j\in J}\in \{\pm 1\}^{(J)}$ and $w\in S_{(J)}$, and let $I$ be some finite subset of $J$ containing the supports of $\sigma$ and $w$. Pick any $i\in S\setminus I$, and let $\tau=(\tau_j)_{j\in J}\in \{\pm 1\}^{(J)}$ with support contained in $\{i\}$ be such that $\tau\sigma\in \{\pm 1\}_2^{(J)}$. It then follows from (\ref{eqn:basic_loc_fin}) that
\begin{equation*}
\begin{aligned}
\la(\sigma w\inv.\chi-\chi)&=\sum_{j\in J}{\la_j(\sigma_{j}d_{w(j)}-d_j)}
=\sum_{j\in J}{\la_j(\tau_j\sigma_{j}d_{w(j)}-d_j)}-\la_i(\tau_id_i-d_i)\\
&\geq \la(\tau\sigma w\inv.\chi-\chi)-2M(|r|+\epsilon).
\end{aligned}
\end{equation*}
Hence $$\inf\la(W(B_J).\chi-\chi)\geq \inf\la(W(D_J).\chi-\chi)-2M(|r|+\epsilon).$$
This concludes the proof of the lemma.
\end{proof}

\begin{remark}\label{remark:min_energy_DJ}
For each $X\in\{A,B,C,D,BC\}$, let $C_{\min}(\lambda,X_J)$ denote the set of $\chi\in\RR^J$ such that $\inf\la(W(X_J).\chi-\chi)=0$. Note that, by Proposition~\ref{prop:minimality}, this is consistent with Definition~\ref{definition:cones_fin}. 

For $X\in\{C,BC\}$, we have $W(X_J)=W(B_J)$, and hence $C_{\min}(\lambda,X_J)=C_{\min}(\lambda,B_J)$. In order to determine $C_{\min}(\lambda,D_J)$, we associate to each
$\chi=(d_j)_{j\in J}\in\RR^J$ the (possibly empty) set $$I^{\mi}_{\la,\chi}:=\big\{i\in J \ \big| \ |\la_i|=\inf_{j\in J}|\la_j| \quad \textrm{and} \quad |d_i|=\inf_{j\in J}|d_j|\big\}.$$ 
Note that $C_{\min}(\lambda,B_J)\subseteq C_{\min}(\lambda,D_J)$ because $W(D_J)\leq W(B_J)$. One can then check, as in the proof of Proposition~\ref{prop:minimality} (or directly from \cite[Corollary~3.2]{Convexhull}), that $\chi\in C_{\min}(\lambda,D_J)$ if and only if either $\chi\in C_{\min}(\lambda,B_J)$, or else $\sigma_i\chi\in C_{\min}(\lambda,B_J)$ for some $i\in I^{\mi}_{\la,\chi}$, where $\sigma_i\in \{\pm 1\}^{(J)}$ has support $\{i\}$. As this fact will not be needed in our characterisation of the positive energy condition (see Lemma~\ref{lemma:BCD}), we leave it as an exercise.
\end{remark}

We first characterise the PEC for $W(A_J)$.

\begin{theorem}\label{theorem:charact_PEC_locally_finite}
Let $J$ be a set, and let $\la=(\la_j)_{j\in J}$ and $\chi=(d_j)_{j\in J}$ be elements of $\RR^J$. Assume that $\la$ is discrete and bounded. Then the following are equivalent:
\begin{enumerate}
\item
$(J,\la,\chi)$ satisfies the PEC for $W(A_J)$.
\item
$\chi\in C_{\min}(\lambda,A_J)+\ell^1(J)$.
\end{enumerate}
\end{theorem}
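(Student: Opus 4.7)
The implication $(2) \Rightarrow (1)$ is immediate: given a decomposition $\chi = \chi_{\mi} + \chi_{\su}$ with $\chi_{\mi} \in C_{\min}(\la, A_J)$ and $\chi_{\su} \in \ell^1(J)$, Proposition~\ref{prop:minimality} gives $\la(w.\chi_{\mi} - \chi_{\mi}) \geq 0$ for every $w \in W(A_J)$, and Lemma~\ref{lemma:invariance_summable2} shows that the PEC is preserved when adding the $\ell^1$-perturbation $\chi_{\su}$.

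For the reverse implication, assume (1). Since $\la$ is bounded and discrete, $\Lambda(J) = \{m_1 < \dots < m_N\}$ is a finite set. The case $N = 1$ is trivial, since then $C_{\min}(\la, A_J) = \R^J$, so we assume $N \geq 2$. By Lemma~\ref{lemma:extremal_values}, $(J, \la, \chi)$ is essentially bounded, so the accumulation points $r^{\min}_m$ and $r^{\max}_m$ are defined for each $m \in \overline{\Lambda^{\infty}}(J)$. Enumerate $\overline{\Lambda^{\infty}}(J) = \{m_{k_1} < \dots < m_{k_p}\}$, and observe that $k_1 = 1$ and $k_p = N$ because $m_{\min}, m_{\max} \in \overline{\Lambda^{\infty}}(J)$.

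The plan is to choose ``separators'' $\tilde s_1 \geq \dots \geq \tilde s_{p-1}$ in $\R$, one between each consecutive pair in $\overline{\Lambda^{\infty}}(J)$, and to define $\chi_{\mi} = ((d_{\mi})_j)_{j \in J}$ by letting $(d_{\mi})_j$ be the clip of $d_j$ to the interval $[\tilde s_s, \tilde s_{s-1}]$ for $j \in J_{m_{k_s}}$ (using the conventions $\tilde s_0 := +\infty$ and $\tilde s_p := -\infty$), and by setting $(d_{\mi})_j := \tilde s_s$ for $j$ in any finite intermediate level $J_{m_k}$ with $k_s < k < k_{s+1}$. Proposition~\ref{prop:3cases} applied to the pair $(m_{k_s}, m_{k_{s+1}})$ provides the key dichotomy used to select the separators: in its case~(1) -- which also covers the boundary situations $r^{\min}_{m_{\min}} = \infty$ or $r^{\max}_{m_{\max}} = -\infty$ -- I would pick any $\tilde s_s \in \R$ with $r^{\max}_{m_{k_{s+1}}} < \tilde s_s < r^{\min}_{m_{k_s}}$ (with $\pm\infty$ interpreted in the obvious way), so that both $J_{m_{k_s}}^{<\tilde s_s}$ and $J_{m_{k_{s+1}}}^{>\tilde s_s}$ are finite; in its case~(2) I would take $\tilde s_s := r^{\max}_{m_{k_{s+1}}} = r^{\min}_{m_{k_s}}$, so that both sets are summable. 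The monotonicity $\tilde s_{s+1} \leq \tilde s_s$ is then automatic from the chain $\tilde s_{s+1} \leq r^{\min}_{m_{k_{s+1}}} \leq r^{\max}_{m_{k_{s+1}}} \leq \tilde s_s$.

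Membership $\chi_{\mi} \in C_{\min}(\la, A_J)$ will follow directly from the monotonicity of the $\tilde s_s$: the values $(d_{\mi})_{J_{m_k}}$ form a non-increasing staircase as $k$ grows. For $\chi_{\su} := \chi - \chi_{\mi} \in \ell^1(J)$, the contribution of each level $J_{m_{k_s}}$ to $\sum_{j \in J} |d_j - (d_{\mi})_j|$ equals $\Sigma(J_{m_{k_s}}^{<\tilde s_s}) + \Sigma(J_{m_{k_s}}^{>\tilde s_{s-1}})$, which is finite by the choice of separators; each finite intermediate level contributes a finite sum; and there are only finitely many levels. The main subtlety -- explaining why one cannot naively clip to $[r^{\min}_m, r^{\max}_m]$ -- is that $J_m^{<r^{\min}_m}$ may be infinite with divergent $\Sigma$-sum in case~(1) of Proposition~\ref{prop:3cases}; the crucial gain from that proposition is precisely that in case~(1) we may move the separator strictly below $r^{\min}_m$, which then forces $J_m^{<\tilde s_s}$ to be finite.
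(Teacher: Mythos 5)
Your proof is correct and takes essentially the same route as the paper: both argue through Lemma~\ref{lemma:extremal_values}, then use the chain of inequalities from Proposition~\ref{prop:3cases} to choose a non-increasing family of separators (the paper's $a_t$, your $\tilde s_s$) and clip $\chi$ on each infinite level to the interval they bound, with the remainder shown to be $\ell^1$ precisely because on each level the "overflow" set is either finite (case~(1)) or summable (case~(2)). The only cosmetic difference is that the paper absorbs the finitely many $j$ with $\la_j\in\Lambda(J)\setminus\overline{\Lambda^\infty}(J)$ in a second $\ell^1$-perturbation $\chi'$, whereas you fold them into the definition of $\chi_{\mi}$ by setting $(d_{\mi})_j=\tilde s_s$ there directly.
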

\begin{proof}
The implication (2)$\implies$(1) readily follows from Proposition~\ref{prop:minimality} and Lemma~\ref{lemma:invariance_summable2}. Assume now that $(J,\la,\chi)$ satisfies the PEC for $W(A_J)$, and let us prove that, up to substracting from $\chi$ some element of $\ell^1(J)$, one has $\chi\in C_{\min}(\lambda,A_J)$. Since $\chi\in C_{\min}(\lambda,A_J)$ if $\la$ is constant, we may assume that $(J,\la,\chi)$ is nontrivial, that is, $m_{\min}\neq m_{\max}$. Moreover, note that $(J,\la,\chi)$ is essentially bounded by Lemma~\ref{lemma:extremal_values}. 

By assumption, $\Lambda(J)$ is finite. Write $\overline{\Lambda^{\infty}}(J)=\Lambda^{\infty}(J)\cup\{m_{\min},m_{\max}\}=\{n_0,n_1,\dots,n_k\}$ so that $$m_{\min}=n_0<n_1<\dots<n_k=m_{\max}$$ for some $k\geq 1$. Proposition~\ref{prop:3cases} then implies that
$$r^{\min}_{n_0}\geq r^{\max}_{n_1}\geq r^{\min}_{n_1}\geq \dots\geq r^{\max}_{n_{k-1}}\geq r^{\min}_{n_{k-1}}\geq r^{\max}_{n_k}.$$
For each $t\in\{1,\dots,k\}$, we set
\begin{equation*}
a_t=
\left\{
\begin{array}{ll}
\tfrac{1}{2}(r_{n_{t-1}}^{\min}+r_{n_{t}}^{\max})\quad &\textrm{if $r_{n_{t-1}}^{\min},r_{n_{t}}^{\max}\in\RR$,}\\
r_{n_{t}}^{\max}+1\quad &\textrm{if $t=1$, $r_{n_{0}}^{\min}=\infty$ and $r_{n_{1}}^{\max}\in\RR$,}\\
r_{n_{t-1}}^{\min}-1\quad &\textrm{if $t=k$, $r_{n_{k}}^{\max}=-\infty$ and $r_{n_{k-1}}^{\min}\in\RR$,}\\
0\quad &\textrm{if $t=k=1$, $r_{n_{0}}^{\min}=\infty$ and $r_{n_{1}}^{\max}=-\infty$,}\\
\end{array}
\right.
\end{equation*}
so that 
$$r^{\min}_{n_0}\geq a_1\geq r^{\max}_{n_1}\geq r^{\min}_{n_1}\geq a_2\geq \dots\geq a_{k-1}\geq r^{\max}_{n_{k-1}}\geq r^{\min}_{n_{k-1}}\geq a_k\geq r^{\max}_{n_k}.$$
We also set $a_0:=\infty$ and $a_{k+1}:=-\infty$.
Fix some $t\in\{0,1,\dots,k\}$. We claim that the tuple $\chi_t=(d_j')_{j\in J}$ defined by 
\begin{equation*}
d_j'=
\left\{
\begin{aligned}
d_j-a_{t+1}\quad &\textrm{if $j\in J_{n_t}$ and $d_j<a_{t+1}$,}\\
d_j-a_t\quad &\textrm{if $j\in J_{n_t}$ and $d_j>a_t$,}\\
0\quad &\textrm{otherwise}
\end{aligned}
\right.
\end{equation*}
is in $\ell^1(J)$. 

Let $I_+$ (resp. $I_-$) denote the set of $j\in J_{n_t}$ such that $d_j>a_t$ (resp. $d_j<a_{t+1}$). We have to show that $$\sum_{j\in I_+}{|d_j-a_{t}|}<\infty\quad\textrm{and}\quad \sum_{j\in I_-}{|d_j-a_{t+1}|}<\infty.$$
We prove this for $I_+$, the proof for $I_-$ being similar. Since if $t=0$, the set $I_+$ is empty, we may assume that $t\in\{1,\dots,k\}$. Moreover, since $a_t\geq r^{\max}_{n_t}$, the set $I_+$ is finite as soon as $a_t> r^{\max}_{n_t}$. Note that this includes in particular the case where $t=k$ and $r_{n_{k}}^{\max}=-\infty$, in which case $D(J_{n_t})$ is bounded above and $J_{n_t}$ has no accumulation point, as well as the case where $t=1$, $r_{n_{0}}^{\min}=\infty$ and $r_{n_{1}}^{\max}\in\RR$, in which case $a_t=r^{\max}_{n_t}+1>r^{\max}_{n_t}$. Hence we may also assume that $a_t= r^{\max}_{n_t}\in\RR$ and that $r^{\min}_{n_{t-1}}\in\RR$. But then $r^{\min}_{n_{t-1}}=a_t=r^{\max}_{n_t}$, and hence the conclusion follows from Proposition~\ref{prop:3cases}.

Thus $\sum_{t=0}^k{\chi_t}\in \ell^1(J)$. Hence, up to replacing $\chi$ by $\chi-\sum_{t=0}^k{\chi_t}$, we may assume that 
\begin{equation}\label{eqn:intervals}
D(J_{n_t})\subseteq [a_{t+1},a_t] \quad\textrm{for all $t=0,1,\dots,k$}.
\end{equation}
We next define the tuple $\chi'=(d_j')_{j\in J}$ by 
\begin{equation*}
d_j'=
\left\{
\begin{array}{ll}
d_j-a_{t}\quad &\textrm{if $\la_j\in \Lambda(J)\setminus\overline{\Lambda^{\infty}}(J)$ and $n_{t-1}<\la_j<n_{t}$,}\\
0\quad &\textrm{otherwise.}
\end{array}
\right.
\end{equation*}
Since the set of $j\in J$ with $\la_j\in \Lambda(J)\setminus\overline{\Lambda^{\infty}}(J)$ is finite, $\chi'\in \ell^1(J)$. Moreover, it follows from (\ref{eqn:intervals}) that $\chi-\chi'\in C_{\min}(\lambda,A_J)$. This concludes the proof of the theorem.
\end{proof}

Before characterising the PEC for $W(B_J)$, we need one more lemma.
For a tuple $\nu=(\nu_j)_{j\in J}\in\RR^J$, we put
$$|\nu|:=(|\nu_j|)_{j\in J}\in\RR^J.$$

\begin{lemma}\label{lemma:PECB_PECAabs}
Assume that $(J,\la,\chi)$ satisfies the PEC for $W(B_J)$. Then $(J,-|\la|,|\chi|)$ satisfies the PEC for $W(A_J)$.
\end{lemma}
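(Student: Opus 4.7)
The strategy is to exploit the extra freedom provided by the sign-flip factor $\sigma \in \{\pm 1\}^{(J)}$ in $W(B_J) = S_{(J)} \ltimes \{\pm 1\}^{(J)}$ to convert an arbitrary permutation estimate in the target triple $(J,-|\la|,|\chi|)$ into an estimate of the PEC-for-$W(B_J)$ form for the original triple.

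Concretely, let $C>-\infty$ be the infimum $\inf\la(W(B_J).\chi-\chi)$, which is finite by hypothesis. Given an arbitrary $w\in S_{(J)}=W(A_J)$, I would like to bound
\[
(-|\la|)(w\inv.|\chi|-|\chi|)=\sum_{j\in J}|\la_j|\bigl(|d_j|-|d_{w(j)}|\bigr)
\]
from below uniformly in $w$ (note the sum is finite since $w$ has finite support). To do this, I define an auxiliary $\sigma=(\sigma_j)_{j\in J}\in\{\pm 1\}^{(J)}$, supported inside $\mathrm{supp}(w)$, by requiring $\sigma_j\la_jd_{w(j)}=-|\la_j||d_{w(j)}|$ (with arbitrary choice if the product vanishes), and $\sigma_j=1$ for $j\notin\mathrm{supp}(w)$. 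Applying the PEC for $W(B_J)$ to this pair $(\sigma,w)$ and using formula (\ref{eqn:basic_loc_fin}) yields
\[
-C\leq \la(\sigma w\inv.\chi-\chi)=\sum_{j\in\mathrm{supp}(w)}\bigl(-|\la_j||d_{w(j)}|-\la_jd_j\bigr),
\]
equivalently $\sum_{j\in\mathrm{supp}(w)}\bigl(|\la_j||d_{w(j)}|+\la_jd_j\bigr)\leq C$.

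Now the elementary inequality $\la_jd_j\geq -|\la_j||d_j|$ (applied termwise) upgrades this to
\[
\sum_{j\in\mathrm{supp}(w)}|\la_j|\bigl(|d_{w(j)}|-|d_j|\bigr)\leq C,
\]
and since the summand vanishes outside $\mathrm{supp}(w)$, the sum may be extended to all of $J$. Thus $(-|\la|)(w\inv.|\chi|-|\chi|)\geq -C$ for every $w\in S_{(J)}$, which is exactly the PEC for $W(A_J)$ applied to $(J,-|\la|,|\chi|)$.

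There is no real obstacle here beyond careful sign bookkeeping; the key observation is just that the sign-flip freedom in $W(B_J)$ lets one force the ``$\sigma_j d_{w(j)}$ part'' of the $B_J$-estimate to take the most negative value $-|\la_j||d_{w(j)}|$, while the $d_j$ part is automatically bounded below by $-|\la_j||d_j|$. These two facts combine to give precisely the absolute-value estimate needed for the $A_J$-PEC.
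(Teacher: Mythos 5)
Your proposal is correct and follows essentially the same route as the paper: choose $\sigma$ on the support of $w$ so that $\sigma_j\la_j d_{w(j)}=-|\la_j||d_{w(j)}|$, invoke the $W(B_J)$-PEC for the pair $(\sigma,w)$, and then use the termwise estimate $-\la_j d_j\leq |\la_j||d_j|$ to extract the $W(A_J)$-bound for $(-|\la|,|\chi|)$. One small slip: with $C:=\inf\la\bigl(W(B_J).\chi-\chi\bigr)$, the PEC gives $\la(\sigma w\inv.\chi-\chi)\geq C$, not $\geq -C$; the error propagates consistently and your final lower bound should read $C$ rather than $-C$. Since either constant is finite, this does not affect the validity of the argument, but the sign should be fixed.
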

\begin{proof}
Let $w\in S_{(J)}$, and let $I$ be some finite subset of $J$ containing the support of $w$. Let $\sigma=(\sigma_j)_{j\in J}\in \{\pm 1\}^{(J)}$ with support in $I$ be such that $\la_{j}\sigma_{j}d_{w(j)}=-|\la_{j}d_{w(j)}|$ for all $j\in I$. It then follows from (\ref{eqn:basic_loc_fin}) that
$$-|\la|(w\inv.|\chi|-|\chi|)=-\sum_{j\in I}{|\la_{j}|\cdot (|d_{w(j)}|-|d_{j}|)}
\geq\sum_{j\in I}{\la_{j} (\sigma_jd_{w(j)}-d_{j})}=\la(\sigma w\inv.\chi-\chi).$$
Hence 
$$\inf\big(-|\la|(W(A_J).|\chi|-|\chi|)\big)\geq \inf\la(W(B_J).\chi-\chi),$$
as desired.
\end{proof}

\begin{theorem}\label{theorem:charact_PEC_locally_finiteB}
Let $J$ be a set, and let $\la=(\la_j)_{j\in J}$ and $\chi=(d_j)_{j\in J}$ be elements of $\RR^J$. Assume that $\la$ is discrete and bounded. Then the following are equivalent:
\begin{enumerate}
\item
$(J,\la,\chi)$ satisfies the PEC for $W(B_J)$.
\item
$\chi\in C_{\min}(\lambda,B_J)+\ell^1(J)$.
\end{enumerate}
\end{theorem}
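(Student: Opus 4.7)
The implication (2) $\Rightarrow$ (1) is immediate: Proposition~\ref{prop:minimality} gives $\inf\lambda(W(B_J).\chi_0-\chi_0)=0$ for $\chi_0\in C_{\min}(\lambda,B_J)$, and Lemma~\ref{lemma:invariance_summable2} shows that adding an $\ell^1(J)$-perturbation preserves the PEC for $W(B_J)$. The main content is the forward implication, which I plan to establish by reducing to the $A_J$-case already proved in Theorem~\ref{theorem:charact_PEC_locally_finite} via the bridge provided by Lemma~\ref{lemma:PECB_PECAabs}. The technical heart is the translation between the purely order-theoretic cone $C_{\min}(-|\lambda|,A_J)$ produced by that theorem and the sign and absolute-value monotonicity conditions defining $C_{\min}(\lambda,B_J)$; the main obstacle is the locus $J_0:=\{j\in J:\lambda_j=0\}$, which is invisible to the PEC summability arguments yet is constrained by the $B_J$-minimal-energy condition via $|d_j|\leq|d_i|$ for all $i\notin J_0$.

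Write $\chi=(d_j)_{j\in J}$ and suppose $(J,\lambda,\chi)$ satisfies the PEC for $W(B_J)$. Since $\lambda$ is bounded and discrete, the nonzero values of $|\lambda|$ are bounded below by some $c>0$. By Lemma~\ref{lemma:summability_at_0_B}, the set $J_+=\{j:\lambda_jd_j>0\}$ lies in $J\setminus J_0$ and satisfies $\sum_{j\in J_+}|d_j|\leq c^{-1}\sum_{j\in J_+}|\lambda_jd_j|<\infty$. Subtracting $\chi|_{J_+}\in\ell^1(J)$ and applying Lemma~\ref{lemma:invariance_summable2}, I may assume $\lambda_jd_j\leq 0$ for all $j\in J$, so that $d_j=-\operatorname{sgn}(\lambda_j)\,|d_j|$ whenever $j\notin J_0$. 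Lemma~\ref{lemma:PECB_PECAabs} then yields that $(J,-|\lambda|,|\chi|)$ satisfies the PEC for $W(A_J)$, and Theorem~\ref{theorem:charact_PEC_locally_finite} produces a decomposition $|\chi|=\varphi_0+\varphi_1$ with $\varphi_0\in C_{\min}(-|\lambda|,A_J)$ and $\varphi_1\in\ell^1(J)$. Moreover $\varphi_0$ may be taken non-negative: on $\{\varphi_0<0\}$ the inequality $|\chi|\geq 0$ forces $|\varphi_0|\leq\varphi_1$, so $(\varphi_0)_-\in\ell^1(J)$, and replacing $\varphi_0$ by its positive part (adjusting $\varphi_1$ accordingly) preserves both the $C_{\min}(-|\lambda|,A_J)$-property (since $x\mapsto\max(x,0)$ is non-decreasing) and the $\ell^1(J)$-membership of $\varphi_1$.

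Assuming $\lambda\not\equiv 0$ (otherwise $C_{\min}(\lambda,B_J)=\RR^J$ and the claim is trivial), set $C:=\inf\{(\varphi_0)_i:i\in J\setminus J_0\}\in[0,\infty)$ and define
\[ (\eta_0)_j := -\operatorname{sgn}(\lambda_j)(\varphi_0)_j \ (j\notin J_0), \qquad (\eta_0)_j := \operatorname{sgn}(d_j)\min(|d_j|,C) \ (j\in J_0). \]
To verify $\eta_0\in C_{\min}(\lambda,B_J)$: the sign condition $\lambda_j(\eta_0)_j\leq 0$ holds since $\varphi_0\geq 0$ off $J_0$ and trivially on $J_0$, while the monotonicity $|\lambda_i|<|\lambda_j|\Rightarrow|(\eta_0)_i|\leq|(\eta_0)_j|$ follows off $J_0$ from the defining property of $\varphi_0$ and, for $i\in J_0$, from $|(\eta_0)_i|\leq C\leq(\varphi_0)_j=|(\eta_0)_j|$. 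To verify $\chi-\eta_0\in\ell^1(J)$: on $J\setminus J_0$ one has $d_j-(\eta_0)_j=-\operatorname{sgn}(\lambda_j)(\varphi_1)_j$, while on $J_0$ the difference equals $\operatorname{sgn}(d_j)\max(|d_j|-C,0)$, whose absolute value is bounded pointwise by $|(\varphi_1)_j|$; indeed $|d_j|=(\varphi_0)_j+(\varphi_1)_j$ with $(\varphi_0)_j\leq C$ for $j\in J_0$, the latter being part of the $C_{\min}(-|\lambda|,A_J)$-condition. Summing, $\chi-\eta_0\in\ell^1(J)$, which together with the initial subtraction of $\chi|_{J_+}$ yields $\chi\in C_{\min}(\lambda,B_J)+\ell^1(J)$, as required.
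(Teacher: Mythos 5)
Your proof is correct and follows the same line as the paper's: reduce to the $A_J$-case via Lemma~\ref{lemma:PECB_PECAabs} and Theorem~\ref{theorem:charact_PEC_locally_finite}, then restore signs and control the locus $J_+=\{j : \lambda_jd_j>0\}$ via Lemma~\ref{lemma:summability_at_0_B}. The differences are in bookkeeping: you peel off the $J_+$-contribution first rather than last, and you assemble a $B_J$-minimal tuple $\eta_0$ explicitly (with a cutoff at $C$ on $J_0$), whereas the paper reads the absolute-value monotonicity off the $A_J$-decomposition and then sign-flips on $J_+$. One step you add is worth highlighting: replacing the $A_J$-minimal part $\varphi_0$ by its positive part, using that $(\varphi_0)_-$ is dominated by $\varphi_1\in\ell^1(J)$. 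This is a genuine refinement rather than mere caution: the paper's passage from $\chi\in\sigma C_{\min}(-|\lambda|,A_J)$ to $|\chi|\in C_{\min}(-|\lambda|,A_J)$ implicitly assumes the element $\sigma\chi\in C_{\min}(-|\lambda|,A_J)$ remains coordinatewise nonnegative after the $\ell^1$-subtraction, and your positive-part argument is exactly what legitimizes that assumption. Finally, your cutoff $\min(|d_j|,C)$ on $J_0$ works but is slightly more elaborate than necessary: since $\varphi_0\geq 0$ and $\varphi_0\in C_{\min}(-|\lambda|,A_J)$ already force $(\varphi_0)_j\leq C$ for $j\in J_0$, you could simply set $(\eta_0)_j=\operatorname{sgn}(d_j)(\varphi_0)_j$ there, giving $|d_j-(\eta_0)_j|=|(\varphi_1)_j|$ uniformly on all of $J$ and avoiding the separate $\max(|d_j|-C,0)$ estimate.
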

\begin{proof}
The implication (2)$\implies$(1) readily follows from Proposition~\ref{prop:minimality} and Lemma~\ref{lemma:invariance_summable2}. Assume now that $(J,\la,\chi)$ satisfies the PEC for $W(B_J)$ and let us prove that, up to substracting from $\chi$ some element of $\ell^1(J)$, one has $\chi\in C_{\min}(\lambda,B_J)$. 

Since $(J,-|\la|,|\chi|)$ satisfies the PEC for $W(A_J)$ by Lemma~\ref{lemma:PECB_PECAabs}, we know from Theorem~\ref{theorem:charact_PEC_locally_finite} that $$|\chi|\in C_{\min}(-|\la|,A_J)+ \ell^1(J).$$ Let $\sigma=(\sigma_j)_{j\in J}\in\{\pm 1\}^J$ be such that $\sigma_jd_j\geq 0$ for all $j\in J$. In other words, $|\chi|=\sigma\chi$. Hence $\chi\in \sigma C_{\min}(-|\la|,A_J)+ \sigma \ell^1(J)$. Note that $\sigma \ell^1(J)=\ell^1(J)$. Up to substracting from $\chi$ some element of $\ell^1(J)$, we may thus assume without loss of generality that 
\begin{equation}\label{eqn:thmlocfin1}
\chi\in \sigma C_{\min}(-|\la|,A_J).
\end{equation}
Note that we may in addition assume that $(J,\la,\chi)$ satisfies the PEC for $W(B_J)$ by Lemma~\ref{lemma:invariance_summable2}.
We deduce from (\ref{eqn:thmlocfin1}) that $|\chi|\in C_{\min}(-|\la|,A_J)$, so that 
\begin{equation}\label{eqn:thmlocfin2}
\textrm{$\forall i,j\in J$: $|\la_i|<|\la_j|\implies  |d_i|\leq |d_j|$.}
\end{equation}
On the other hand, Lemma~\ref{lemma:summability_at_0_B} implies that 
$$\sum_{j\in J_+}{|d_j|}<\infty,\quad\textrm{where $J_+:=\{j\in J \ | \ \la_jd_j>0\}$.}$$
In particular, the tuple $\chi'=(d_j')_{j\in J}$ defined by 
\begin{equation*}
d_j'=
\left\{
\begin{array}{ll}
2d_j\quad &\textrm{if $j\in J_+$,}\\
0\quad &\textrm{otherwise}
\end{array}
\right.
\end{equation*}
belongs to $\ell^1(J)$. Since $\chi-\chi'=\sigma'\chi$, where $\sigma'=(\sigma'_j)_{j\in J}\in\{\pm 1\}^J$ has support $J_+$, the tuple $\chi-\chi'$ still satisfies the condition (\ref{eqn:thmlocfin2}), with $d_j$ replaced by $d_j-d'_j$ for all $j\in J$. Therefore, up to substracting $\chi'\in \ell^1(J)$ from $\chi$, we may assume that $\la_jd_j\leq 0$ for all $j\in J$ and that (\ref{eqn:thmlocfin2}) holds, so that $\chi\in C_{\min}(\la,B_J)$, as desired. 
\end{proof}

\section{Proof of Theorem~\ref{thm:mainintro}}\label{section:proof_main_thmintro}
By \cite{LN04}, the root system $\Delta$ of the locally finite split simple Lie algebra $\g$ over $\KK=\CC$ is isomorphic to one of the root systems $\Delta=\Delta(X_J)$ for $X\in\{A,B,C,D\}$ described in \S\ref{subsection:preliminaries}, where the Cartan subalgebra $\hh$ (resp. a one-dimensional extension of $\hh$ if $X=A$) is identified with $V_{\CC}:=V\otimes_{\RR}\CC=\sppan_{\CC}\{e_j \ | \ j\in J\}$. The identification of $V_{\CC}$ with $\hh^*$ (resp. a one-dimensional extension of $\hh^*$) induced by the assignment $e_j\mapsto \epsilon_j$ ($j\in J$) yields in turn an identification of the Weyl group $W\leq\GL(\hh^*)$ of $\g$ with the Weyl group $W(X_J)$ defined in \S\ref{subsection:Weyl_group_Delta}. 

Let $\la\in\hh^*$ be discrete and bounded. Then the restriction of $\la$ to $V$ is real valued, and $\la$ is determined by the tuple $(\la_j)_{j\in J}\in\RR^{J}$ defined by $\la_j=\la(e_j)$, $j\in J$. Identifying $\la$ with this tuple, $\la$ is then discrete and bounded in the sense of Definition~\ref{definition:discrete_bounded}.

Similarly, the character $\chi\co\ZZ[\Delta]\to\RR$ is the restriction of a $\ZZ$-linear map $$\widetilde{\chi}\co \sppan_{\ZZ}\{\epsilon_j \ | \ j\in J\}\to\RR:\epsilon_j\mapsto d_j,$$ and is thus determined by the tuple $(d_j)_{j\in J}\in\RR^{J}$. Note that $\ZZ[\Delta]=\sppan_{\ZZ}\{\epsilon_j \ | \ j\in J\}$ in all cases, except for $\Delta=\Delta(A_J)$, in which case $\ZZ[\Delta]$ is the corank $1$ submodule $\{\sum_{j\in J}{x_j\epsilon_j} \ | \ \sum_{j\in J}{x_j}=0\}$ of $\sppan_{\ZZ}\{\epsilon_j \ | \ j\in J\}$. Hence, either $\widetilde{\chi}=\chi$, or else $\Delta=\Delta(A_J)$ and $\widetilde{\chi}$ is determined by $\chi$ up to a constant. As $W(A_J)=S_{(J)}$, modifying $\widetilde{\chi}$ by a constant does not modify the value of the infimum of $\widetilde{\chi}(W(A_J).\la-\la)$. We may thus safely replace $\chi$ by $\widetilde{\chi}$, which we identify with the tuple $(d_j)_{j\in J}\in\RR^{J}$.

Finally, with the above identifications, we have for all $\sigma\in\{\pm 1\}^{(J)}$ and $w\in S_{(J)}$ that
\begin{equation*}
\begin{aligned}
\chi(\sigma w.\la-\la)&=\chi\Big(\sum_{j\in J}{\la_j(\sigma_{w(j)}\epsilon_{w(j)}-\epsilon_j)}\Big)=\sum_{j\in J}{\la_j(\sigma_{w(j)}d_{w(j)}-d_j)}=\sum_{j\in J}{d_j(\sigma_{j}\la_{w\inv(j)}-\la_j)}\\
&=\la\Big(\sum_{j\in J}{d_j(\sigma_{j}e_{w\inv(j)}-e_j)}\Big)=\la\Big(w\inv\sigma.\sum_{j\in J}{d_je_j}-\sum_{j\in J}{d_je_j}\Big)\\
&=\la((\sigma w)\inv.\chi-\chi),
\end{aligned}
\end{equation*}
and hence
$$\inf\chi(W.\la-\la)=\inf\la(W.\chi-\chi).$$
In particular, the condition $\inf\chi(W.\la-\la)>-\infty$ in the statement of Theorem~\ref{thm:mainintro} is equivalent to requiring the triple $(J,\la,\chi)$ with $\la=(\la_j)_{j\in J}$ and $\chi=(d_j)_{j\in J}$ to satisfy the PEC for $W=W(X_J)$ in the sense of Definition~\ref{definition:PEC}.

For $X=A$, Theorem~\ref{thm:mainintro} thus sums up Proposition~\ref{prop:minimality} and Theorem~\ref{theorem:charact_PEC_locally_finite}. Since $C_{\min}(\lambda,B_J)\subseteq C_{\min}(\lambda,D_J)$ (see Remark~\ref{remark:min_energy_DJ}) and since the PEC for $W(B_J)$, $W(C_J)$ and $W(D_J)$ are equivalent by Lemma~\ref{lemma:BCD}, the conclusion of Theorem~\ref{thm:mainintro} for $X\in\{B,C,D\}$ follows from Proposition~\ref{prop:minimality} and Theorem~\ref{theorem:charact_PEC_locally_finiteB}. \hspace{\fill}$\Box$

\bibliographystyle{amsalpha} 
\bibliography{these}

\end{document}